\newcommand{\mb}{\mathbf}
\newcommand{\R}{\mathbb{R}}
\newtheorem{theorem}{Theorem}[section]
\newtheorem{proposition}[theorem]{Proposition}
\newtheorem{lemma}[theorem]{Lemma}
\theoremstyle{definition}
\newtheorem{remark}[theorem]{Remark}
\title{Maximal $m$-distance sets containing the representation of the Hamming graph $H(n,m)$} 
\author{
Saori Adachi, Rina Hayashi, Hiroshi Nozaki, and Chika Yamamoto
}
\begin{document}
\maketitle

\renewcommand{\thefootnote}{\fnsymbol{footnote}}
\footnote[0]{2010 Mathematics Subject Classification: 
05D05  	
(05B30)  	
\\
\noindent
{\it Saori Adachi}: 
	Chiryu-higashi High School,  
	18-6 Oyama, Nagashino-cho, 
	Chiryu, Aichi 472-8639, 
	Japan.
	k630945d@m2.aichi-c.ed.jp.
\\ \noindent
{\it Hiroshi Nozaki}: 
	Department of Mathematics Education, 
	Aichi University of Education, 
	1 Hirosawa, Igaya-cho, 
	Kariya, Aichi 448-8542, 
	Japan.
	hnozaki@auecc.aichi-edu.ac.jp.
\\ \noindent
{\it Rina Hayashi, Chika Yamamoto}: 
They were under-graduate students in 
Aichi University of Education in the period 2010--2014. 
}

\begin{abstract}
A set $X$ in the Euclidean space $\mathbb{R}^d$ is an {\it $m$-distance set} if the set of Euclidean distances between two distinct points in $X$ has size $m$. An $m$-distance set $X$ in $\mathbb{R}^d$ is {\it maximal} if there does not exist a vector $\mb{x}$ in $\mathbb{R}^d$ such that the union of $X$ and $\{\mb{x}\}$ still has only $m$ distances.  
Bannai, Sato, and Shigezumi (2012) investigated  maximal $m$-distance sets that contain the Euclidean representation of the Johnson graph $J(n,m)$.  
In this paper, we consider the same problem for the Hamming graph $H(n,m)$. The Euclidean representation of $H(n,m)$ is an $m$-distance set in $\mathbb{R}^{m(n-1)}$.
We prove that 
if the representation of $H(n,m)$
is not maximal as an $m$-distance set 
for some $m$, 
then the maximum value of $n$ is $m^2 + m - 1$. 
Moreover we classify the largest 
$m$-distance sets that contain the representation of $H(n,m)$ for $n\geq 2$ and  $m\leq 4$. 
We also classify the maximal $2$-distance sets that are in $\mathbb{R}^{2n-1}$ and contain the representation of $H(n,2)$ for $n\geq 2$. 
\end{abstract}
\textbf{Key words}:
Hamming graph, few-distance set,  Euclidean representation, Erd\H{o}s--Ko--Rado theorem.  

\section{Introduction}
A subset $X$ of the Euclidean space $\R^d$ is 
 an {\it $m$-distance set} 
if the size of the set of distances between two distinct points in $X$ is equal to $m$. The size of an $m$-distance set is bounded above by $\binom{d+m}{m}$ \cite{BBS}.
One of major problems is to find the maximum
possible cardinality of an $m$-distance set for given $m$ and $d$.  
The largest $1$-distance set in $\R^d$ 
is the regular simplex for $d\geq 1$, and it has $d+1$ points.  
Largest $2$-distance sets in $\R^d$ are classified for $d\leq 7$ \cite{ES66,L97}. 
Lison\v{e}k \cite{L97} constructed a largest $2$-distance set in $\R^8$, which is the only known set attaining the bound $|X|\leq \binom{d+m}{m}$ for $m\geq 2$.  
  Largest $m$-distance sets in $\R^2$ are classified for $m\leq 5$ \cite{EF96,S04,S08}. 
Two largest $6$-distance sets are known \cite{W12}. 
Tables \ref{tb:1}, \ref{tb:2}  show the cardinalities $|X|$ of largest distance sets $X$, and the number $\#$ of the sets, up to isometry.
\begin{table}
\begin{center}
\caption{$m=2$}\label{tb:1}
\begin{tabular}{c|ccccccc}
$d$ &  2 & 3 & 4 & 5 & 6 & 7 & 8\\
\hline
$|X|$ & 5 & 6 & 10 & 16 & 27 & 29 & 45\\
\hline 
  $\#$  & 1 & 6 & 1  & 1  & 1  & 1  & $\geq 1$ 
\end{tabular}
\caption{$d=2$}\label{tb:2}
\begin{tabular}{c|ccccc  } 
$m$ &  2 & 3 & 4 & 5 & 6\\
\hline
$|X|$& 5 & 7 & 9 & 12 &13 \\
\hline 
 $\#$   & 1 & 2 & 4 & 1& $\geq$ 2\\ 
\end{tabular}
\end{center}
\end{table}
The largest $3$-distance set in $\R^3$ is the vertex set of the icosahedron \cite{Spre}.

The Euclidean representation $\tilde{J}(n,m)$ of the Johnson scheme $J(n,m)$ is the subset of 
$\R^{n}$ consisting of all vectors 
with $1$'s in $m$ coordinates and $0$'s elsewhere. 
The set $\tilde{J}(n,m)$ with $n\geq 2m$ can be interpreted as an $m$-distance set in $\R^{n-1}$ because the sum of entries of each element is $m$.   
The largest known $m$-distance sets in $\R^{n-1}$ are mostly $\tilde{J}(n,m)$. 
An $m$-distance set $X$ in $\R^n$ is {\it maximal} if there does not exist 
$\mb{x}\in \R^n$ such that $X\cup\{\mb{x}\}$ is still $m$-distance. 
Bannai, Sato, and Shigezumi \cite{BSS12}
investigated maximal $m$-distance sets 
that are in $\R^{n-1}$ and contain $\tilde{J}(n,m)$. 
They gave a necessary and sufficient condition for $\tilde{J}(n,m)$ to be a maximal $m$-distance set in $\R^{n-1}$, and  
 classified the largest $m$-distance sets
containing $\tilde{J}(n,m)$ for $n\geq 2$ and $m\leq 5$, except for $(n,m)=(9,4)$. 
The case $(n,m)=(9,4)$ is solved in
\cite{ANX}. 
This construction of distance sets might be possible for 
other association schemes. 
In this paper we consider the Hamming scheme $H(n,m)$.

Let $F_n=\{1,\ldots,n\}$, $\mathbf{x}=(x_1,\ldots,x_m)\in F_n^m$, and
$\mathbf{y}=(y_1,\ldots,y_m)
\in F_n^m$. 
The Hamming distance of $\mathbf{x}$ and $\mathbf{y}$ is defined to be $h(\mathbf{x},\mathbf{y})=|\{i \colon\, x_i \ne y_i \}|$.
The Hamming scheme $H(n,m)$ is an association scheme $(F_n^m, \{R_0,\ldots,R_m\})$, where 
$
R_i=\{(\mathbf{x},\mathbf{y})\colon\, h(\mathbf{x},\mathbf{y})=i  \}. 
$
Let $\varphi: F_n^m \rightarrow \R^{mn}$ be the embedding defined by
\[
\varphi: \mb{x}=(x_1, \ldots, x_m) \mapsto 
\tilde{\mb{x}}=\sum_{i=1}^m \mb{e}_{(i-1)n+x_i},
\]
where $\{\mb{e}_1,\ldots,\mb{e}_{mn}\}$ is the standard basis of $\R^{mn}$. 
Let $\tilde{H}(n,m)$ denote the image of $\varphi$. 
Note that $h(\mathbf{x},\mb{y})=k$ for $\mb{x},\mb{y} \in H(n,m)$ if and only if $d(\tilde{\mb{x}},\tilde{\mb{y}})=\sqrt{2k}$ for $\tilde{\mb{x}},\tilde{\mb{y}}\in \tilde{H}(n,m)$, where $d(,)$ is the Euclidean distance. 
Let $\mathbf{j}_k$ denote the vector 
\[
\mathbf{j}_k=\sum_{i=(k-1)n+1}^{kn} \mb{e}_i.
\]
Every vector in $\tilde{H}(n,m)$ is perpendicular to $\mathbf{j}_k$ for $k\in \{1,\ldots, m\}$. We can therefore interpret
$\tilde{H}(n,m)$ as an $m$-distance set in $\R^{m(n-1)}$. 
We consider maximal $m$-distance sets that are in $\R^{m(n-1)}$ and contain $\tilde{H}(n,m)$. 

This paper is summarized as follows.    
In Section \ref{sec:2}, 
we give some notation, and determine the 
coordinates of a vector $\mb{x}$ when $\mb{x}$ can be added to $\tilde{H}(n,m)$ while maintaining $m$-distance.   
In Section \ref{sec:3}, the maximal $2$-distance sets containing $\tilde{H}(n,2)$ are classified by an explicit way. 
In Section \ref{sec:4}, 
we give a necessary and sufficient condition for 
 $\tilde{H}(n,m)$ to be maximal as an $m$-distance set. 
Moreover, we prove that 
if $\tilde{H}(n,m)$ is not maximal as an $m$-distance set for some $m$, then
the maximum value of $n$ is equal to $m^2+m-1$. 
In Section \ref{sec:5}, 
we classify the largest $m$-distance sets that are in $\R^{m(n-1)}$ and contain $\tilde{H}(n,m)$ for $n\geq 2$ and  $m\leq 4$. 
Tables \ref{tb:3}--\ref{tb:5} show the maximum cardinalities $|X|$ and dimension
$d=m(n-1)$.    
In Section \ref{sec:6}, 
we classify maximal $2$-distance sets
that are
in $\mathbb{R}^{2(n-1)+1}$ and contain $\tilde{H}(n,2)$. 

\begin{table}\begin{center}
\caption{$m=2$}\label{tb:3}
  \begin{tabular}{c|c} 
    $n$ & $5$ \\ \hline
    $d$ & $8$ \\ \hline
    $|X|$ & $40$
  \end{tabular}
\caption{$m=3$}\label{tb:4}
  \begin{tabular}{c|cccc} 
    $n$ & $3$ & $5$ & $9$ & $11$ \\ \hline
    $d$ & $6$ & $12$ & $24$ & $30$ \\ \hline
    $|X|$ & $40$ & $200$ & $981$ & $1451$ 
  \end{tabular}
\caption{$m=4$} \label{tb:5}
 \begin{tabular}{c|cccccccccc} 
    $n$ & $2$ & $3$ & $5$ & $6$ &$7$ & $9$ & $11$ & $13$ & $14$ & $19$ \\ \hline
    $d$ & $4$ & $8$ & $16$ & $20$ &$24$  & $32$ & $40$ & $48$ & $52$ & $72$ \\ \hline
    $|X|$ & $25$ & $222$ & $1600$ & $2004$ & $3390$ & $8829$ & $16566$ & $29056$ & $39417$ & $133381$ 
     \end{tabular} \end{center}
\end{table}
\section{Vectors that can be added to $\tilde{H}(n,m)$}  \label{sec:2}
First we give some notation. 
For real numbers $x_1,\ldots, x_n$ and natural numbers $\lambda_1,\ldots, \lambda_n$, we use the notation  \[(x_1^{\lambda_1},\ldots, x_n^{\lambda_n} )=
(\underbrace{x_1,\ldots,x_1}_{\lambda_1}, \ldots, \underbrace{x_n,\ldots,x_n}_{\lambda_n})\in \R^{ \lambda_1+\cdots+\lambda_n}, \]
and $(x_1^{\lambda_1})$ is abbreviated to $x_1^{\lambda_1}$. 
Let $\mb{x}=(x_1,\ldots,x_n)\in \R^n$ 
and  $X_i \subset \R^{n}$.  
We use the following notation:  
\[
\mb{x}^P=(x_1,\ldots,x_n)^P=
\{(x_{\sigma(1)},\ldots, x_{\sigma(n)}) \colon\, \sigma \in S_n\} \subset \R^n,
\]
\[
(X_1,\ldots, X_m)=
\{(\mb{x}_{1},\ldots,\mb{x}_{m})
\colon\, \mb{x}_i \in X_i\}
\subset \R^{mn}, 
\]
\[
(X_1,\ldots, X_m)^P=
\{(\mb{x}_{\sigma(1)},\ldots,\mb{x}_{\sigma(m)})
\colon\, \mb{x}_i \in X_i, \sigma \in S_m\}
\subset \R^{mn}, 
\]
where $S_n$ is the permutation group. A one-element set $\{\mb{x}\}$ is abbreviated to $\mb{x}$ in these expressions.

Suppose $\mb{x} \in \R^{mn}$ can be added to $\tilde{H}(n,m)$ while maintaining $m$-distance.  
 Note that $d(\mb{x},\mb{y})\in \{\sqrt{2}, \sqrt{4},\ldots, \sqrt{2m}\}$ for each $\mb{y} \in \tilde{H}(n,m)$. 
For $\mb{x}=(\mb{x}_1,\ldots, \mb{x}_m)$ with 
$\mb{x}_1,\ldots, \mb{x}_m \in \R^n$, 
the sum of all entries of $\mb{x}_i$ is $1$ for each $i \in \{1,\ldots,m\}$.  Because of the automorphism group of $\tilde{H}(n,m)$,  each vector in $(\mb{x}_1^P,\ldots, \mb{x}_m^P)^P$ can be added to $\tilde{H}(n,m)$.

For $\mb{x}=(x_1,\ldots, x_{mn})$, and $\mb{y},\mb{y}' \in \tilde{H}(n,m)$
such that $\mb{y}=\sum_{i=1}^m \mb{e}_{(i-1)n+q_i}$ and $\mb{y}'=\sum_{i=1}^m \mb{e}_{(i-1)n+q_i'}$, 
we obtain
\begin{equation} \label{eq:*}
\sum_{i=1}^m(x_{(i-1)n+q_i}-x_{(i-1)n+q_i'})\in \{0,\pm 1,\ldots, \pm (m-1)\}
\end{equation}
from $d(\mb{x},\mb{y})^2-d(\mb{x},\mb{y}')^2$. 
Let $j\in \{1,\ldots, m\}$ be fixed.
If $q_j\ne q_j'$ and $q_i=q_i'$ for $i\ne j$, then 
\[
x_{(j-1)n+q_j}-x_{(j-1)n+q_j'}\in \{0,\pm 1,\ldots, \pm (m-1)\}.
\]
 Thus the block $\mb{x}_j$  satisfies
\[
\mb{x}_j \in \big(\alpha_j^{k_1^{(j)}},
(\alpha_j-1)^{k_2^{(j)}},\ldots,(\alpha_j-t_j+1)^{k_{t_j}^{(j)}}\big)^P
\]
for some $\alpha_j \in \R$, $t_j \in \mathbb{Z}$ with $1 \leq t_j \leq m$, and $k_i^{(j)} \in \mathbb{Z}$ such that $0\leq k_i^{(j)}$ and $\sum_{i=1}^{t_j}k_i^{(j)}
= n$. By \eqref{eq:*} and $1 \leq t_j \leq m$, we have  
\begin{equation}\label{eq:t_j}
 \sum_{j=1}^m t_j \leq 2m-1. 
\end{equation}
Since the sum of all entries of 
$\mb{x}_j$ is $1$, it follows that
\[
n \alpha_j= 1+ \sum_{i=1}^{t_j} (i-1) k_i^{(j)}.
\]
Let $k_0^{(j)}=1+ \sum_{i=1}^{t_j} (i-1) k_i^{(j)}\in \mathbb{Z}$.  
Now we have
\[
\mb{x}_j \in \left(\frac{k_0^{(j)}}{n}^{k_1^{(j)}},
\left(\frac{k_0^{(j)}}{n}-1\right)^{k_2^{(j)}},\ldots,\left(\frac{k_0^{(j)}}{n}-t_j+1\right)^{k_{t_j}^{(j)}}\right)^P.
\] 

For $\mb{x}$ with $\mb{x}_j=(x_{1},\ldots,x_{n})$ and $\mb{y}$ with $\mb{y}_j= \mb{e}_{q_j}$, 
let 
\[
l_i^{(j)}=\begin{cases}
1 \text{ if $x_{q_j}=k_0^{(j)}/n-i+1$},\\
0 \text{ otherwise}.
\end{cases}
\]
 The squared distance $d(\mb{x}, \mb{y})^2$
can be expressed by  
\begin{align} \nonumber
d(\mb{x}, \mb{y})^2
&=\sum_{j=1}^m \left(
\sum_{i=1}^{t_j} l_i^{(j)}\left(1-\left(\frac{k_0^{(j)}}{n}-i+1 \right)\right)^2+
\sum_{i=1}^{t_j} (k_i^{(j)}-l_i^{(j)})\left(\frac{k_0^{(j)}}{n}-i+1 \right)^2 \right)\\
\label{eq:d(x,y)}&=
\sum_{j=1}^m\left(1-n-2k_0^{(j)}-\frac{{k_0^{(j)}}^2}{n}
+ \sum_{i=1}^{t_j}i^2 k_i^{(j)}
+ 2\sum_{i=1}^{t_j} il_i^{(j)}
\right).
\end{align}

 \section{Case $m=2$}\label{sec:3}
In this section, we demonstrate a manner of classifying maximal 
$2$-distance sets which contain $\tilde{H}(n,2)$ as easy case. 
Suppose $\mb{x}\in \R^{2n}$ can be added to $\tilde{H}(n,2)$ while maintaining $2$-distance. 
By \eqref{eq:t_j}, we have $(t_1,t_2)=(1,1), (2,1)$, or $(1,2)$. 
Thus $\mb{x}$ can be expressed by 
\[
\mb{x} \in  \left(\left(\frac{k_0^{(1)}}{n}^{k_1^{(1)}},
\left(\frac{k_0^{(1)}}{n}-1\right)^{n-k_1^{(1)}}\right)^P,
\frac{1}{n}^n
\right)^P
\]
such that $k_0^{(1)}=1+n-k_1^{(1)}$. 
By \eqref{eq:d(x,y)}, for $\mb{y} \in 
\tilde{H}(n,2)$, 
we have
\[
d(\mb{x}, \mb{y})^2
=-1-\frac{1}{n}+k_0^{(1)}-\frac{{k_0^{(1)}}^2}{n}
+ 2 l_1^{(1)}+4 l_2^{(1)} \in \{2,4\}.
\]
This implies that
\[
-1-\frac{1}{n}+k_0^{(1)}-\frac{{k_0^{(1)}}^2}{n}=0,
\]
and hence $n=k_0^{(1)}+1+2/(k_0^{(1)}-1)$. 
The possible pairs are $(n,k_0^{(1)})=(5,2),(5,3)$, namely, $\mb{x}$ is contained in  
\[
X_1=\left( \left(\frac{2}{5}^4, -\frac{3}{5} \right)^P, \frac{1}{5}^5\right)^P
\text{ or }
X_2=\left( \left(\frac{3}{5}^3, -\frac{2}{5}^2 \right)^P, \frac{1}{5}^5\right)^P.
\]
Considering the distance of each pair of elements of $X_1 \cup X_2$, the maximal $2$-distance set containing $\tilde{H}(n,2)$ is
\[
\left( \left(\frac{2}{5}^4, -\frac{3}{5} \right)^P, \frac{1}{5}^5\right)
\cup 
\left(  \frac{1}{5}^5,\left(\frac{3}{5}^3, -\frac{2}{5}^2 \right)^P\right) \cup \tilde{H}(n,2) \quad[\text{40 points}],
\]
up to block permutations.

\section{Conditions for the maximality of $\tilde{H}(n,m)$}\label{sec:4}
In this section, we give a necessary and  sufficient conditon  for
$\tilde{H}(n,m)$ to be  a maximal $m$-distance set in $\mathbb{R}^{n(m-1)}$. We also show a manner of finding vectors that can be added to
$\tilde{H}(n,m)$.  

Let $\mathfrak{x}_j$ denote the subset of $\R^{n}$ defined by
\[
\mathfrak{x}_j=\mathfrak{x}_j(k_0^{(j)},\ldots,k_{t_j}^{(j)})=\left(\frac{k_0^{(j)}}{n}^{k_1^{(j)}},
\left(\frac{k_0^{(j)}}{n}-1\right)^{k_2^{(j)}},\ldots,\left(\frac{k_0^{(j)}}{n}-t_j+1\right)^{k_{t_j}^{(j)}}\right)^P
\]
such that $k_0^{(j)}=1+\sum_{i=1}^{t_j}(i-1)k_i^{(j)}$. Let $\mathfrak{X}$ denote the subset of $\R^{mn}$ defined by 
\begin{equation}\label{eq:XX}
 \mathfrak{X}
=(\mathfrak{x}_1,\ldots, \mathfrak{x}_m). 
\end{equation}
The integers $k_i^{(j)}$ with $1\leq i \leq t_j$ and $1\leq j \leq m$ are called the parameters of  $\mathfrak{X}$.  
 If $\mb{x} \in \R^{mn}$ can
be added to $\tilde{H}(n,m)$, then $\mb{x}$ is in $\mathfrak{X}$ with some parameters $k_i^{(j)}$.
Moreover if some vector in $\mathfrak{X}$ can be added to $\tilde{H}(n,m)$, then so does every vector in 
$ \mathfrak{X}$. 
For given $\mb{x}\in \mathfrak{X}$,  
let 
\[
M_{\mathfrak{X}}=\max_{ \mb{y} \in \tilde{H}(n,m)} d(\mb{x},\mb{y})^2. 
\]
Note that $M_{\mathfrak{X}}$ does not depend on the choice of $\mb{x} \in \mathfrak{X}$. 
The value $M_\mathfrak{X}$ determine if $x\in \mathfrak{X}$ can be added to $\tilde{H}(n,m)$ . 
\begin{lemma} \label{lem:M<2m}
Each vector $\mb{x}\in \mathfrak{X}$ can be added to $\tilde{H}(n,m)$ while maintaining $m$-distance if and only if 
$M_\mathfrak{X}$ is an even positive integer less than or equal to $2m$. 
\end{lemma}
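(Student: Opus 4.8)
The plan is to translate the combinatorial condition ``$\mb{x}$ can be added'' into an arithmetic condition on the numbers $d(\mb{x},\mb{y})^2$, and then to show that controlling their maximum already suffices. Since $\tilde{H}(n,m)$ realises exactly the $m$ distances $\sqrt{2},\sqrt{4},\ldots,\sqrt{2m}$ (all Hamming distances $1,\ldots,m$ occur for $n\ge 2$), adjoining $\mb{x}$ keeps the set $m$-distance if and only if no new distance is created, that is, if and only if $d(\mb{x},\mb{y})^2\in\{2,4,\ldots,2m\}$ for every $\mb{y}\in\tilde{H}(n,m)$. So the whole lemma is a statement about where the reals $d(\mb{x},\mb{y})^2$ lie.

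First I would extract the crucial structural fact from \eqref{eq:d(x,y)}. As $\mb{y}$ ranges over $\tilde{H}(n,m)$, the parameters $k_i^{(j)}$ stay fixed and only the $l_i^{(j)}$ change; for each block $j$ exactly one $l_i^{(j)}$ equals $1$, so the $\mb{y}$-dependent part of \eqref{eq:d(x,y)} is $2\sum_{j=1}^m i_j$, where $i_j$ is the level of the entry of $\mb{x}_j$ selected by $\mb{y}$. Hence $d(\mb{x},\mb{y})^2=c+2\sum_{j=1}^m i_j$ for a constant $c$ depending only on $\mathfrak{X}$. Two consequences: any two of the squared distances differ by an even integer (so they are pairwise congruent modulo $2$ and share a common fractional part), and $M_{\mathfrak{X}}$ is obtained by taking every $i_j$ as large as possible.

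The forward direction is then immediate: $M_{\mathfrak{X}}$ is one of the squared distances, so if $\mb{x}$ is addable then $M_{\mathfrak{X}}\in\{2,4,\ldots,2m\}$, an even positive integer at most $2m$. For the converse I would assume $M_{\mathfrak{X}}$ is an even positive integer with $M_{\mathfrak{X}}\le 2m$. Because every $d(\mb{x},\mb{y})^2$ differs from $M_{\mathfrak{X}}$ by an even integer, each is itself an even integer, and each is at most $M_{\mathfrak{X}}\le 2m$. The remaining task is the lower bound $d(\mb{x},\mb{y})^2\ge 2$, and it suffices to treat the minimum $d_{\min}^2:=\min_{\mb{y}}d(\mb{x},\mb{y})^2$. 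The clean way is not to estimate it but to observe that it is a \emph{positive even integer}: for $\mb{x}\notin\tilde{H}(n,m)$ the point $\mb{x}$ is at strictly positive distance from every $\mb{y}$, so $d_{\min}^2>0$, and being an even integer forces $d_{\min}^2\ge 2$. (If $\mb{x}\in\tilde{H}(n,m)$ the vector is addable trivially and $M_{\mathfrak{X}}=2m$, so the equivalence holds in this degenerate case as well.) Thus every $d(\mb{x},\mb{y})^2$ is an even integer in $[2,2m]$, which is precisely the set $\{2,4,\ldots,2m\}$; notably no count of the number of distinct distances is required.

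The step I expect to be the main obstacle is this lower bound $d_{\min}^2\ge 2$. A direct analytic estimate of the minimum squared distance (for instance bounding the block contributions by Cauchy--Schwarz) does not by itself yield the sharp value $2$, and indeed $d_{\min}^2$ can be an arbitrary positive real for a general $\mb{x}\in\mathfrak{X}$. The decisive idea is to transport integrality and parity from the maximum $M_{\mathfrak{X}}$ down to the minimum through the congruence ``all squared distances agree modulo $2$'' coming from \eqref{eq:d(x,y)}, after which mere positivity of a squared distance upgrades automatically to ``$\ge 2$''. I would therefore be careful to establish that congruence first and to phrase the minimum-distance argument so that it uses only $d_{\min}^2>0$ together with $d_{\min}^2\in 2\mathbb{Z}$.
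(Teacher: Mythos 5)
Your proof is correct and takes essentially the same route as the paper: both arguments use \eqref{eq:d(x,y)} to show that all squared distances $d(\mb{x},\mb{y})^2$ for $\mb{x}\in\mathfrak{X}$, $\mb{y}\in\tilde{H}(n,m)$ agree modulo $2$, so that once $M_\mathfrak{X}$ is an even positive integer at most $2m$, every squared distance is an even integer in $\{2,4,\ldots,2m\}$. Your explicit treatment of the lower bound $d(\mb{x},\mb{y})^2\geq 2$ (positivity plus parity, with the degenerate case $\mb{x}\in\tilde{H}(n,m)$ noted separately) merely spells out what the paper's proof leaves implicit in the phrase ``even positive integer.''
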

\begin{proof}
Sufficiency is clear. Suppose $M_\mathfrak{X}$ is an even positive integer less than or equal to $2m$. 
For any $\mb{x},\mb{x}' \in \mathfrak{X}$ and $\mb{y}, \mb{y}'\in \tilde{H}(n,m)$, the difference between $d(\mb{x},\mb{y})^2$ and $d(\mb{x}',\mb{y}')^2$ is an even integer by \eqref{eq:d(x,y)}. 
Therefore, $d(\mb{x},\mb{y})^2$ is an even positive integer less than or equal to $2m$ for any $\mb{x}\in \mathfrak{X}$ and $\mb{y}\in \tilde{H}(n,m)$. This implies each $\mb{x}\in \mathfrak{X}$ can be added to $\tilde{H}(n,m)$. 
\end{proof}
It follows from \eqref{eq:d(x,y)} that
\begin{equation}\label{eq:M_X}
M_\mathfrak{X} 
=
\sum_{j=1}^m\left(1-n-2k_0^{(j)}-\frac{{k_0^{(j)}}^2}{n}
+ \sum_{i=1}^{t_j}i^2 k_i^{(j)}
+ 2t_j
\right).
\end{equation}
If $t_l \geq 3$ for $\mathfrak{X}$ with parameters $k_i^{(j)}$,  
then we can obtain $\mathfrak{X}'$ with parameters ${k'}_i^{(j)}$, where ${k'}_i^{(j)}$ are defined as follows:  
\begin{align*}
&{k'}_i^{(j)}=k_i^{(j)}\qquad  \text{ for $(j,i)\ne (l,1),(l,2),(l,t_l-1),(l,t_l)$}, \\
&{k'}_1^{(l)}=k_1^{(l)}-1, \quad
{k'}_2^{(l)}=k_2^{(l)}+1, \quad
{k'}_
{t_l-1}^{(l)}=k_{t_l-1}^{(l)}+1, \quad
{k'}_{t_l}^{(l)}=k_{t_l}^{(l)}-1 
\quad \text{ if $t_l\geq 4$}, \\
&{k'}_1^{(l)}=k_1^{(l)}-1, \quad
{k'}_2^{(l)}=k_2^{(l)}+2, \quad
{k'}_{3}^{(l)}=k_{3}^{(l)}-1 
\quad \text{ if $t_l=3$.}
\end{align*} 
 
\begin{lemma}\label{lem:X'}
If $M_{\mathfrak{X}}$ is even, then 
$M_{\mathfrak{X}'}$ is also even and
\[
M_{\mathfrak{X}}>M_{\mathfrak{X}'}. 
\]
\end{lemma}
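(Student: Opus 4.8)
The plan is to compute the difference $M_{\mathfrak{X}'}-M_{\mathfrak{X}}$ directly from the closed form \eqref{eq:M_X}. Since $\mathfrak{X}$ and $\mathfrak{X}'$ agree in every block except the $l$-th, and since the prescribed shift changes only the multiplicities within the $l$-th block while leaving the range of values (hence $t_l$, hence the summand $2t_l$) intact, only the $l$-th term of \eqref{eq:M_X} contributes to the difference. It therefore suffices to track how the three $l$-dependent quantities $k_0^{(l)}$, $\sum_{i}i^2 k_i^{(l)}$, and $2t_l$ change under the modification.

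The first and decisive step is to verify that $k_0^{(l)}$ is invariant. Recall $k_0^{(l)}=1+\sum_{i=1}^{t_l}(i-1)k_i^{(l)}$. For $t_l\geq 4$, the increments applied to the weighted terms $(i-1)k_i^{(l)}$ at the indices $i=1,2,t_l-1,t_l$ are $0,\,+1,\,+(t_l-2),\,-(t_l-1)$, whose sum is $0$; for $t_l=3$ the increments to $(i-1)k_i^{(l)}$ at $i=1,2,3$ are $0,\,+2,\,-2$, again summing to $0$. This invariance is the crux of the argument, since it removes both of the non-integral terms $-2k_0^{(l)}$ and $-{k_0^{(l)}}^2/n$ from the difference, leaving a manifestly integral change.

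The remaining step is the change in $\sum_{i}i^2 k_i^{(l)}$. For $t_l\geq 4$ one finds $-1+4+(t_l-1)^2-t_l^2=4-2t_l$, and for $t_l=3$ the prescribed shift gives $-1+8-9=-2$, which is again $4-2t_l$. Hence in both cases
\[
M_{\mathfrak{X}'}-M_{\mathfrak{X}}=4-2t_l.
\]
Because $t_l\geq 3$, the right-hand side is an even integer that is at most $-2$. Consequently $M_{\mathfrak{X}'}$ has the same parity as $M_{\mathfrak{X}}$, so it is even whenever $M_{\mathfrak{X}}$ is, and $M_{\mathfrak{X}}>M_{\mathfrak{X}'}$, as claimed.

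There is essentially no obstacle beyond careful bookkeeping: the whole statement hinges on the deliberate design of the parameter shift, which is engineered precisely so that $k_0^{(l)}$ is preserved. The one place genuinely worth checking is the degenerate overlap of indices at $t_l=3$, where the index $t_l-1=2$ coincides with the index $2$; this is exactly why that case is singled out, and the only verification I expect to do explicitly is that the separate prescription reproduces the same difference $4-2t_l$.
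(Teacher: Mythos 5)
Your computation is right up to one decisive oversight: the claim that the shift ``leaves the range of values (hence $t_l$, hence the summand $2t_l$) intact'' is false precisely when $k_{t_l}^{(l)}=1$. In that case ${k'}_{t_l}^{(l)}=0$, so the bottom value $\alpha_l-t_l+1$ disappears from the $l$-th block entirely. The term $2t_j$ in \eqref{eq:M_X} is not a free bookkeeping constant: it records the maximum of $2\sum_i i\,l_i^{(j)}$, which is attained at the largest index $i$ with $k_i^{(j)}\geq 1$. Hence for $\mathfrak{X}'$ this summand becomes $2t_l'$ with $t_l'=t_l-1$ when $k_{t_l}^{(l)}=1$ (and $t_l'=t_l$ when $k_{t_l}^{(l)}\geq 2$), and the correct difference is
\[
M_{\mathfrak{X}}-M_{\mathfrak{X}'}=2(2t_l-t_l'-2),
\]
which is exactly what the paper's proof states. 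Your identity $M_{\mathfrak{X}'}-M_{\mathfrak{X}}=4-2t_l$ holds only in the non-degenerate case; in the degenerate case the difference is $2-2t_l$, off by $2$ from your value. You flagged the $t_l=3$ index overlap as the one degeneracy worth checking, but that one is harmless; the multiplicity-hitting-zero degeneracy at $i=t_l$ is the one that actually changes the formula. (The analogous top degeneracy $k_1^{(l)}=1$, giving ${k'}_1^{(l)}=0$, is genuinely harmless: neither the invariance of $k_0^{(l)}$ --- which you verify correctly, and which matches the paper --- nor the maximum of $\sum_i i\,l_i^{(l)}$ depends on $k_1^{(l)}$ being positive.)

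The damage is contained: in both cases $2(2t_l-t_l'-2)$ is an even positive integer for $t_l\geq 3$, so parity preservation and strict decrease survive once the missing case is added, and your argument then coincides with the paper's. But as written, your proof asserts a false intermediate identity and omits the case analysis on $k_{t_l}^{(l)}$ that the paper makes explicit, so it needs that repair before it is complete.
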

\begin{proof}
It follows from \eqref{eq:M_X} that
$M_{\mathfrak{X}}-M_{\mathfrak{X}'}
=2(2t_l-t_l'-2)$. 
Note that $t_l'=t_l$ if $k_{t_l}^{(l)}\geq 2$, and 
$t_l'=t_l-1$ if $k_{t_l}^{(l)}= 1$. 
Since $t_l\geq 3$ holds, $M_{\mathfrak{X}}-M_{\mathfrak{X}'}$ is an even positive integer. This lemma therefore follows.  
\end{proof}
\begin{remark}\label{rem:X_0}
By Lemmas \ref{lem:M<2m} and \ref{lem:X'}, if each element of $\mathfrak{X}$ can be added to $\tilde{H}(n,m)$, then so does each element of $\mathfrak{X}'$. 
Repeating this modification 
$\mathfrak{X}'$, we can obtain $\mathfrak{X}_0$,  which satisfies $t_j \leq 2$ for $1\leq j\leq m$ and each element of $\mathfrak{X}_0$ can be added to $\tilde{H}(n,m)$.  
\end{remark}
\begin{lemma} \label{lem:k}
The following are equivalent.  
\begin{enumerate}
\item $\tilde{H}(n,m)$ is not  
maximal as an $m$-distance set. 
\item There exist integers $l$, $k_0^{(1)},\ldots, k_0^{(m)}$ such that
$n\geq k_0^{(1)}\geq \cdots \geq k_0^{(l)}>1=k_0^{(l+1)}=\cdots= k_0^{(m)}$, $k_0^{(i)}\ne n$ for some $i$, and 
\begin{equation}
\sum_{j=1}^m  \frac{k_0^{(j)}(n-k_0^{(j)})}{n} 
+2l   \label{eq:M(k_0)}
\end{equation}
is an even integer less than or equal to $2m$.  
\end{enumerate}
\end{lemma}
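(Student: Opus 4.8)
The plan is to exploit the reduction of Remark \ref{rem:X_0}, which lets me assume throughout that every block satisfies $t_j\le 2$, and then to rewrite the quantity $M_\mathfrak{X}$ of \eqref{eq:M_X} purely in terms of the integers $k_0^{(j)}$. First I would record the two shapes a block can take once $t_j\le 2$: if $t_j=1$ then necessarily $k_0^{(j)}=1$ and $\mb{x}_j=(1/n)^n$, while if $t_j=2$ then $k_1^{(j)}=n-k_0^{(j)}+1\ge 1$ and $k_2^{(j)}=k_0^{(j)}-1\ge 1$, so that $2\le k_0^{(j)}\le n$. Substituting these into \eqref{eq:M_X} and simplifying, the contribution of a block with $t_j=1$ is $k_0^{(j)}(n-k_0^{(j)})/n$, and the contribution of a block with $t_j=2$ is $k_0^{(j)}(n-k_0^{(j)})/n+2$. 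Hence, if $l$ denotes the number of blocks with $t_j=2$ (equivalently, with $k_0^{(j)}>1$), one obtains exactly $M_\mathfrak{X}=\sum_{j=1}^m k_0^{(j)}(n-k_0^{(j)})/n+2l$, i.e.\ the expression \eqref{eq:M(k_0)}. This computation is routine and is the bridge between $M_\mathfrak{X}$ and the arithmetic condition in $(2)$.

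Next I would identify which addable vectors actually lie in $\tilde{H}(n,m)$. A point of $\tilde{H}(n,m)$ has each block equal to a standard basis vector, which in the normal form forces $t_j=2$, $k_1^{(j)}=1$, $k_2^{(j)}=n-1$, and therefore $k_0^{(j)}=n$; conversely, the all-$k_0^{(j)}=n$ configuration with $t_j\le 2$ is precisely a point of $\tilde{H}(n,m)$. The crucial observation is that the modification defining $\mathfrak{X}'$ leaves each $k_0^{(j)}$ unchanged (a short check of the four altered parameters, using $k_0^{(j)}=1+\sum_i(i-1)k_i^{(j)}$), so $k_0^{(j)}$ is invariant under the entire reduction of Remark \ref{rem:X_0}. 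Combined with Lemma \ref{lem:X'}, this yields the key dichotomy: if an addable $\mathfrak{X}$ had all $k_0^{(j)}=n$ together with some $t_l\ge 3$, then reducing would strictly decrease $M_\mathfrak{X}$, while the reduced value is forced to equal $2m$, contradicting $M_\mathfrak{X}\le 2m$. Thus an addable vector has all $k_0^{(j)}=n$ if and only if it already lies in $\tilde{H}(n,m)$; equivalently, being addable and outside $\tilde{H}(n,m)$ is the same as being addable with $k_0^{(i)}\ne n$ for some $i$.

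With these two ingredients the equivalence falls out. For $(2)\Rightarrow(1)$, given such integers I build $\mathfrak{X}$ by taking $t_j=2$ with $k_2^{(j)}=k_0^{(j)}-1$ for $j\le l$ and $t_j=1$ for $j>l$; by the computation above $M_\mathfrak{X}$ equals the expression \eqref{eq:M(k_0)}, which is even and at most $2m$ by hypothesis and is automatically positive (it is at least $2l$ when $l\ge 1$, and equals $m(n-1)/n>0$ when $l=0$ and $n\ge 2$). Lemma \ref{lem:M<2m} then makes every $\mb{x}\in\mathfrak{X}$ addable, and since some $k_0^{(i)}\ne n$ such an $\mb{x}$ is not in $\tilde{H}(n,m)$, so $\tilde{H}(n,m)$ is not maximal. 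For $(1)\Rightarrow(2)$, non-maximality provides an addable $\mb{x}\notin\tilde{H}(n,m)$, hence (by the dichotomy) an addable $\mathfrak{X}$ with some $k_0^{(i)}\ne n$; reducing to $\mathfrak{X}_0$ keeps it addable by Remark \ref{rem:X_0} and, by the invariance of the $k_0^{(j)}$, keeps some $k_0^{(i)}\ne n$, while now all $t_j\le 2$. By Lemma \ref{lem:M<2m} the value $M_{\mathfrak{X}_0}$ is even and at most $2m$, and by the first paragraph it equals the expression \eqref{eq:M(k_0)}. Reordering the blocks so that the $k_0^{(j)}$ are non-increasing with the values exceeding $1$ placed first produces exactly the data of $(2)$.

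I expect the main obstacle to be the bookkeeping in the second paragraph: one must verify carefully that the reduction genuinely preserves each $k_0^{(j)}$ and that no addable configuration outside $\tilde{H}(n,m)$ can have all $k_0^{(j)}=n$. Getting this dichotomy exactly right---rather than the routine algebra of the first paragraph---is what guarantees that the side condition $k_0^{(i)}\ne n$ in $(2)$ faithfully encodes $\mb{x}\notin\tilde{H}(n,m)$.
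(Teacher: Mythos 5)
Your proof is correct and takes essentially the same route as the paper's: reduce to $t_j\le 2$ via Remark~\ref{rem:X_0}, compute $M_{\mathfrak{X}}=\sum_{j=1}^m k_0^{(j)}(n-k_0^{(j)})/n+2l$ from \eqref{eq:M_X} (with $l$ the number of blocks with $t_j=2$), and conclude with Lemma~\ref{lem:M<2m}. The only difference is that you make explicit the bookkeeping the paper's three-line proof leaves implicit --- that the side condition $k_0^{(i)}\ne n$ for some $i$ exactly encodes $\mb{x}\notin\tilde{H}(n,m)$, and that this is preserved under the reduction because the parameters $k_0^{(j)}$ are unchanged by the modification $\mathfrak{X}\mapsto\mathfrak{X}'$ while $M$ strictly decreases --- which is a faithful filling-in of details rather than a different argument.
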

\begin{proof}
Suppose $\mathfrak{X}$ with parameters $k_i^{(j)}$ satisfies $t_1=\cdots =t_l=2$ and $t_{l+1}=\cdots =t_m=1$ for some $l$. 
From \eqref{eq:M_X} 
and equations $k_1^{(j)}
=n+1-k_0^{(j)}$, $k_2^{(j)}=k_0^{(j)}-1$, we have
\begin{align*}
M_{\mathfrak{X}}
&=\sum_{j=1}^m\frac{k_0^{(j)}(n-k_0^{(j)})}{n} 
+2l.
\end{align*}
By Lemma~\ref{lem:M<2m} and Remark~\ref{rem:X_0}, this lemma follows. 
\end{proof}
\begin{remark} \label{rem:k}
By symmetry of \eqref{eq:M(k_0)}, there exist $l$, $k_0^{(1)},\ldots, k_0^{(m)}$ such that they satisfy the condition (2) 
in Lemma~\ref{lem:k} if and only if 
there exist $l$, $k_1,\ldots,k_m$ such that  $0\leq k_1 \leq \cdots \leq k_l\leq n/2$, 
 $k_{l+1}=\cdots=k_m=1$, $k_i\ne 0$ for some $i$, and  
\[
\sum_{j=1}^m \frac{k_j(n-k_j)}{n} +2l
\] is an even integer less 
than or equal to $2m$. 
\end{remark}
\begin{theorem} \label{thm:main}
If $\tilde{H}(n,m)$ is not maximal as an $m$-distance set for some $m$, then the maximum value of $n$ is  $m^2+m-1$.   
\end{theorem}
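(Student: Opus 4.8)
The plan is to translate the maximality criterion into a one-variable arithmetic optimization and then bound $n$. By Remark~\ref{rem:k}, $\tilde H(n,m)$ fails to be maximal exactly when there are integers $l$ and $0\le k_1\le\cdots\le k_l\le n/2$, $k_{l+1}=\cdots=k_m=1$, with $k_i\ne 0$ for some $i$, such that $S=\sum_{j=1}^m k_j(n-k_j)/n+2l$ is a positive even integer with $S\le 2m$. Writing $u=m-l$, $T=\sum_{j=1}^l k_j$ and $Q=\sum_{j=1}^l k_j^2$, one computes $\sum_{j=1}^m k_j=T+u$ and $\sum_{j=1}^m k_j^2=Q+u$, so $S=T+m+l-(Q+u)/n$. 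Since $S$, $T$, $m$, $l$ are integers, the quantity $c:=(Q+u)/n$ must be a positive integer; equivalently $cn=Q+u$. In these terms $S\le 2m$ becomes the single linear budget $T\le u+c$, and $n=(Q+u)/c$.

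First I would record the extremal construction to show the value is attained: take $l=1$, $k_1=m$, so $T=m$, $Q=m^2$, $u=m-1$. Setting $n=m^2+m-1$ gives $c=(Q+u)/n=1$, whence $S=T+m+l-c=m+m+1-1=2m$, a positive even integer $\le 2m$; the remaining constraints $k_1=m\le n/2$ and $k_1\ne 0$ hold for $m\ge 2$. Thus $\tilde H(m^2+m-1,m)$ is not maximal.

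For the upper bound $n\le m^2+m-1$ the idea is to control $Q$ in two complementary ways using $0\le k_j\le n/2$. Nonnegativity gives $Q\le T^2\le (u+c)^2$, while $k_j\le n/2$ gives $Q\le \tfrac n2 T\le \tfrac n2(u+c)$. Feeding $cn=Q+u$ into the second bound yields $\tfrac n2(c-u)\le u$, so when $c>u$ we get $n\le 2u/(c-u)\le 2u$. Feeding it into the first bound yields $cn\le (u+c)^2+u$, i.e. $n\le \bigl((u+c)^2+u\bigr)/c=u(u+1)/c+c+2u$; this is convex in $c$, so on $1\le c\le u$ it is maximized at the endpoint $c=1$, giving $n\le u^2+3u+1$. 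The degenerate case $l=0$ (all $k_j=1$) forces $n\mid m$, hence $n\le m$, and the case $l=m$, i.e. $u=0$, is infeasible since then $c>u$ forces $n\le 0$. In every remaining case $u=m-l\le m-1$, and both $2u$ and $u^2+3u+1$ are at most $m^2+m-1$ there, with equality in the second precisely at $u=m-1$, $c=1$, matching the construction.

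The main obstacle I anticipate is the apparent circularity in $n=(Q+u)/c$: enlarging $n$ seems to demand a large $Q=\sum k_j^2$, yet $Q$ can be of order $n^2$, so no single elementary estimate on $Q$ pins down $n$. The resolution is that concentrating mass in one large $k_j$ forces $c$ to grow, which shrinks $n$; capturing this requires the two regimes above — concentration ($Q\le T^2$) for $c\le u$ and spreading ($Q\le \tfrac n2 T$) for $c>u$ — glued along $c=u$ and fed by the budget $T\le u+c$ coming from $S\le 2m$. Verifying that the convex maximum lands exactly on the construction $c=1$, $l=1$, and checking the boundary cases $l\in\{0,m\}$, are the points where I would be most careful with the arithmetic.
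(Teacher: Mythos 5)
Your proposal is correct and follows essentially the same route as the paper's proof: after the reduction via Lemma~\ref{lem:k} and Remark~\ref{rem:k}, your $c=(Q+u)/n$ is exactly the paper's integer $t=\sum_{j}k_j^2/n$, your concentration bound $Q\le T^2$ with the budget $T\le u+c$ reproduces the paper's estimate $n\le (t+m-l)^2/t+(m-l)/t$, your $c>u$ regime (from $k_j\le n/2$) plays the role of the paper's case $t\ge 2m-1$, and the extremal configuration $k_1=m$, $k_2=\cdots=k_m=1$ with $n=m^2+m-1$ is identical. The only differences are cosmetic: you split cases at $c=u$ rather than at $t=2m-1$, and you make explicit the integrality of $t=c$ (and the infeasibility of $l=m$) that the paper uses tacitly.
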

\begin{proof}
Suppose $\tilde{H}(n,m)$ is not maximal as an $m$-distance set.
By Lemma~\ref{lem:k} and Remark~\ref{rem:k}, 
there exist $l$, $k_1,\ldots, k_m$ such that 
 $0\leq k_1 \leq \cdots \leq k_l\leq n/2$, 
$k_{l+1}=\cdots=k_m=1$, $k_i\ne 0$ for some $i$, and   
\[
\sum_{j=1}^m k_j- \sum_{j=1}^m\frac{k_j^2}{n} +2l 
\] 
is an even integer at most $2m$.  
Let $M=\sum_{j=1}^m k_j- \sum_{j=1}^mk_j^2/n +2l$, and $t=\sum_{j=1}^mk_j^2/n$.  
If $l=0$ holds, then 
$M=m-m/n \in \mathbb{Z}
$. 
This implies $n \leq m < m^2+m-1$. 
 Therefore we may suppose $l\geq 1$. 

Assume $t\geq 2m-1$. Since $k_i \leq n/2$ for each $i$, we have  
$
M  \geq t+2l\geq 2m +1, 
$
which contradicts $M\leq 2m$. 

Assume $t \leq 2m-2$. Since $M\leq 2m$, we have
\[
\sum_{j=1}^l k_j \leq 
t+m-l. 
\]
It is therefore satisfied that
\begin{align*}
n&=\frac{1}{t} \sum_{j=1}^m k_j^2
=\frac{1}{t}\sum_{j=1}^l k_j^2 +
\frac{m-l}{t}
 \leq \frac{1}{t}\Big(
\sum_{j=1}^l k_j\Big)^2 +\frac{m-l}{t}
 \leq \frac{(t+m-l)^2}{t}+\frac{m-l}{t}\\
& \leq t+2(m-1)+ \frac{m(m-1)}{t} \leq
\max \{m^2+m-1, \frac{9}{2}m-4 \} =m^2+m-1 
\end{align*}
for $m\geq 2$ and  
$1 \leq t \leq 2m -2$. 
Moreover for $n=m^2+m-1$, $k_1=m$, and $k_2=\ldots=k_m=1$, 
we have $M=2m$. The theorem therefore follows.  
\end{proof}
The remaining part of this section relates to the smallest value of $m$ for fixed $n$, $l$, $k_0^{(1)},\ldots,k_0^{(l)}$ in Lemma~\ref{lem:k} (2).
Each vector in $\mathfrak{X}$ can be added to $\tilde{H}(n,m)$ if and only if each vector in $((1,0^{n-1})^P,\mathfrak{X})$ can be added to $\tilde{H}(n,m+1)$ by Lemma~\ref{lem:M<2m} and \eqref{eq:M_X}. Thus we may suppose $k_0^{(j)}<n$ 
for each $j\in\{1,\ldots,m\}$. Moreover the following holds. 
\begin{proposition}
Let $i$ be a positive integer.
Suppose $i$ is even if $n$ is even, and $i$ is arbitrary
if $n$ is odd.   
If each vector in $\mathfrak{X}
\subset \mathbb{R}^{nm}$ can be added to $\tilde{H}(n,m)$ while maintaining $m$-distance, then each vector in $(\mathfrak{X},1^n,\ldots, 1^n)\subset \mathbb{R}^{n(m+in)}$ can be added to $\tilde{H}(n,m+in)$ while maintaining $m$-distance. 
\end{proposition}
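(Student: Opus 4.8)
The plan is to reduce the claim to the max-squared-distance criterion of Lemma~\ref{lem:M<2m}, applied with $m$ replaced by $m+in$, and to exploit the special nature of the appended blocks. In the statement each $1^n$ is the trivial block $\mathfrak{x}_j$ with $t_j=1$ and $k_0^{(j)}=1$, so all of its entries equal $1/n$; thus $(\mathfrak{X},1^n,\ldots,1^n)$ is again of the form $\mathfrak{X}''$ studied in this section, now living in $\R^{n(m+in)}$ with $in$ appended blocks. By Lemma~\ref{lem:M<2m} (with $m+in$ in place of $m$) it suffices to show that $M_{\mathfrak{X}''}$ is an even positive integer at most $2(m+in)$.

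First I would record the contribution of a single appended block. For any Hamming coordinate $\mb{e}_q\in\R^n$ one computes $d\big((1/n)^n,\mb{e}_q\big)^2=(n-1)/n$, a value independent of $q$; equivalently, substituting $t_j=1$, $k_0^{(j)}=1$, $k_1^{(j)}=n$ into \eqref{eq:M_X} gives the $j$-th summand $(n-1)/n$. Because the squared Euclidean distance splits as a sum over blocks in \eqref{eq:d(x,y)}, appending $in$ such blocks raises the squared distance from the new point to \emph{every} vector of $\tilde{H}(n,m+in)$ by the same constant $in\cdot (n-1)/n=i(n-1)$. Hence
\[
M_{\mathfrak{X}''}=M_{\mathfrak{X}}+i(n-1).
\]

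It then remains to check the three conditions of Lemma~\ref{lem:M<2m}. By hypothesis and that lemma, $M_{\mathfrak{X}}$ is an even positive integer with $M_{\mathfrak{X}}\le 2m$; consequently $M_{\mathfrak{X}''}>0$ and
\[
M_{\mathfrak{X}''}\le 2m+i(n-1)=2m+in-i\le 2(m+in),
\]
so the upper bound holds automatically (each trivial block adds $(n-1)/n<2$ to $M_{\mathfrak{X}''}$ while contributing $2$ to the bound $2(m+in)$). The one point needing care, and the only genuine content of the argument, is parity: $M_{\mathfrak{X}''}$ is even precisely when $i(n-1)$ is even. If $n$ is odd then $n-1$ is even and this holds for every $i$, whereas if $n$ is even then $n-1$ is odd and it holds exactly when $i$ is even. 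This dichotomy is exactly the hypothesis imposed on $i$, so $M_{\mathfrak{X}''}$ is an even positive integer at most $2(m+in)$, and Lemma~\ref{lem:M<2m} yields that every vector of $(\mathfrak{X},1^n,\ldots,1^n)$ can be added to $\tilde{H}(n,m+in)$.
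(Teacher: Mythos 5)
Your proof is correct and takes essentially the same route as the paper's: both compute that each appended block $1^n$ contributes $(n-1)/n$ to the maximal squared distance, deduce $M_{(\mathfrak{X},1^n,\ldots,1^n)}=M_{\mathfrak{X}}+i(n-1)$, and conclude via Lemma~\ref{lem:M<2m} applied with $m+in$ in place of $m$. Your write-up merely spells out the positivity, the bound $2m+i(n-1)\le 2(m+in)$, and the parity dichotomy on $i(n-1)$ that the paper leaves implicit.
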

\begin{proof}
Let $k_i^{(j)}$ be the parameters of $\mathfrak{X}$. 
Since each vector in $\mathfrak{X}$ can be added to 
$\tilde{H}(n,m)$, the value 
\[
M_\mathfrak{X} 
=
\sum_{j=1}^m\left(1-n-2k_0^{(j)}-\frac{{k_0^{(j)}}^2}{n}
+ \sum_{i=1}^{t_j}i^2 k_i^{(j)}
+ 2t_j
\right)
\]
is an even integer at most $2m$. 
The set $\mathfrak{Y}=(\mathfrak{X},1^n,\ldots, 1^n)\subset \mathbb{R}^{n(m+in)}$ satisfies that
the value 
\[
M_\mathfrak{Y}
=M_{\mathfrak{X}}+i(n-1) 
\]
is an even integer at most $2(m+in)$. This implies the proposition. 
\end{proof}
The following theorem gives the minimum value of $m$ such that $\tilde{H}(n,m)$ is not maximal for 
fixed $n$, $l$, $k_0^{(1)},\ldots, k_0^{(l)}$ in 
Lemma~\ref{lem:k}.
\begin{theorem}
Let integers $n$, $l$, $k_0^{(1)},\ldots,k_0^{(l)}$  be fixed. Suppose $k_0^{(j)}<n$ for each $j\in \{1,\ldots ,l \}$.  
Let \[
i=\begin{cases}\mathfrak{i} \text{ if $n$ is odd,}\\
\mathfrak{i} \text{ if $n$ is even and $\mathfrak{i}$ is even,}\\
\mathfrak{i}+1
\text{ if $n$ is even and $\mathfrak{i}$ is odd,}
\end{cases}
\]
where  
\[
\mathfrak{i}=\left\lceil \frac{\sum_{j=1}^l k_0^{(j)}(1+k_0^{(j)})}{n+1} \right\rceil. 
\]
If integers $n$, $l$, $k_0^{(1)},\ldots,k_0^{(m)}$ satisfy Lemma~\ref{lem:k} (2), then the minimum value of $m$ is 
\[
l-\sum_{j=1}^l (k_0^{(j)})^2+in. 
\]
\end{theorem}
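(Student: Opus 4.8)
The plan is to substitute the forced values $k_0^{(l+1)}=\cdots=k_0^{(m)}=1$ into the quantity \eqref{eq:M(k_0)} and then extract, one at a time, the integrality condition, the upper bound $\leq 2m$, and the parity, each as a condition on $m$. Writing $P=\sum_{j=1}^l k_0^{(j)}$ and $Q=\sum_{j=1}^l (k_0^{(j)})^2$, the $m-l$ terms with $k_0^{(j)}=1$ each contribute $(n-1)/n$, so the expression in \eqref{eq:M(k_0)} becomes
\[
M=\sum_{j=1}^l \frac{k_0^{(j)}(n-k_0^{(j)})}{n}+(m-l)\frac{n-1}{n}+2l.
\]
First I would simplify this, using $\sum_{j=1}^l k_0^{(j)}(n-k_0^{(j)})=nP-Q$, into the form
\[
M=P+m+l-\frac{Q+m-l}{n}.
\]

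From this expression the first constraint is immediate: $M$ is an integer if and only if $n\mid Q+m-l$, i.e.\ $m\equiv l-Q \pmod n$. Hence every admissible $m$ has the shape $m=l-Q+cn$ for some integer $c$, and substituting back yields the clean formula
\[
M=P+2l-Q+c(n-1).
\]
The problem thus reduces to minimizing $c$, since $m$ is increasing in $c$; the asserted answer is then $m=l-Q+in$ with $c=i$.

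Next I would impose the two remaining conditions on $c$. The bound $M\leq 2m=2(l-Q+cn)$ rearranges to $P+Q\leq c(n+1)$, that is $c\geq (P+Q)/(n+1)$; since $c$ is an integer and $P+Q=\sum_{j=1}^l k_0^{(j)}(1+k_0^{(j)})$, this is exactly $c\geq \mathfrak{i}$. For the parity, note that $P-Q=-\sum_{j=1}^l k_0^{(j)}(k_0^{(j)}-1)$ is even, being a sum of products of consecutive integers, so from $M=P+2l-Q+c(n-1)$ we get $M\equiv c(n-1)\pmod 2$. When $n$ is odd, $M$ is automatically even and the only restriction is $c\geq\mathfrak{i}$, giving least $c=\mathfrak{i}$. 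When $n$ is even, $M\equiv c\pmod 2$, so $c$ must be even, and the smallest even integer that is $\geq\mathfrak{i}$ is $\mathfrak{i}$ or $\mathfrak{i}+1$ according to the parity of $\mathfrak{i}$. In all three cases the least admissible $c$ is precisely the value $i$ from the statement, so the minimum $m$ equals $l-Q+in=l-\sum_{j=1}^l (k_0^{(j)})^2+in$.

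The computation itself is routine; the points needing care are verifying that $m=l-Q+in$ genuinely satisfies \emph{all} of Lemma~\ref{lem:k}~(2) rather than only the numerical inequalities. In particular I would check $m\geq l$, equivalently $m-l=in-Q\geq 0$, which follows from $in\geq \mathfrak{i} n\geq (P+Q)n/(n+1)\geq Q$ upon using $k_0^{(j)}<n$ (so $Pn\geq Q$); and $M>0$, which holds because $l\geq 1$ makes $2l$ positive while every other summand is nonnegative. The parity bookkeeping in the even-$n$ case, matched against the three-way definition of $i$, is the only genuinely delicate step.
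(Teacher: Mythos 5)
Your proposal is correct and follows essentially the same route as the paper's proof: substitute $k_0^{(l+1)}=\cdots=k_0^{(m)}=1$ into \eqref{eq:M(k_0)}, use integrality of $M$ to write $m=l-\sum_j (k_0^{(j)})^2+in$, reduce $M\leq 2m$ to $i(n+1)\geq \sum_j k_0^{(j)}(1+k_0^{(j)})$, and settle parity via $M\equiv i(n-1)\pmod 2$. Your extra verification that the minimum is actually attained (checking $m-l=in-Q\geq 0$ and $M>0$, the latter valid when read off the original nonnegative-summand form of \eqref{eq:M(k_0)} rather than the reduced expression $P-Q+2l+i(n-1)$, where $P-Q\leq 0$) is a point the paper leaves implicit.
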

\begin{proof}
Since the value 
\[
M=\sum_{j=1}^lk_0^{(j)}+m+l-\frac{\sum_{j=1}^l (k_0^{(j)})^2+m-l}{n}
\]
is an integer, $\sum_{j=1}^l (k_0^{(j)})^2+m-l$ is a multiple of $n$. 
We may express $m=l-\sum_{j=1}^l(k_0^{(j)})^2+in$ for some integer $i$. Now we have 
\[
M=\sum_{j=1}^{l}k_0^{(j)}(1-k_0^{(j)})+(n-1)i+2l.
\]
Since $0<M\leq 2m 
=2(l-\sum_{j=1}^l(k_0^{(j)})^2+in)$, it follows that  
\[
i\geq \left\lceil\frac{\sum_{j=1}^l
k_0^{(j)}(1+k_0^{(j)})}{n+1}\right\rceil.
\]
When $n$ is odd, $M$ is even.  
When $n$ is even, $M$ is even if and only if $i$ is even.
Therefore the theorem follows from Lemma~\ref{lem:M<2m}.
\end{proof}
\section{Largest $m$-distance sets that contain $\tilde{H}(n,m)$}\label{sec:5}
In this section, we classify the 
largest $m$-distance sets that contain $\tilde{H}(n,m)$. For fixed $m$ and each $n$
such that $n \leq m^2+m-1$, we obtain all possible parameters $( k_0^{(1)},\ldots, k_0^{(m)})$ satisfying Lemma~\ref{lem:k} (2) by an exhaustive computer search. For $m=3,4$, Table \ref{table:6} shows the all sets $\mathfrak{X}_0$ obtained from the possible parameters $(k_0^{(1)},\ldots, k_0^{(m)})$,  up to block permutations. 

\begin{table}
\begin{center}
\caption{Each vector in $\mathfrak{X}_0$ can be added to $\tilde{H}(n,m)$} 
\label{table:6}
{\small
\begin{tabular}{|c|c|l|}
\hline
$m$ & $n$ & $n\mathfrak{X}_0$\\ \hline
$3$ & $3$ &  $( 1^3, 1^3, 1^3 )$, $( (2^2,-1)^P, 1^3, 1^3 )$, $( (2^2,-1)^P, (2^2,-1)^P, 1^3 )$ \\ \cline{2-3}
&$5$ & $( (5,0^4)^P, (2^4,-3^1)^P, 1^5 )$,
    $((5,0^4)^P, (3^3,-2^2)^P, 1^5 )$ \\ \cline{2-3}
&$9$ &  $((4^6,-5^3)^P, 1^9, 1^9 )$,
    $( (5^5,-4^4)^P, 1^9, 1^9 )$\\ \cline{2-3}
& $11$ &  $( (3^9,-8^2)^P, 1^{11}, 1^{11} )$,
    $( (8^4,-3^7)^P, 1^{11}, 1^{11} )$ \\ \cline{2-3}
\hline
$4$ & $2$ & $ (1^2, 1^2, 1^2, 1^2)$\\ \cline{2-3}
&$3$ & $( (3,0^2)^P, 1^3, 1^3, 1^3 )$,
     $( (3,0^2)^P, (2^2,-1)^P, 1^3, 1^3 )$,
     $( (3,0^2)^P, (2^2,-1)^P, (2^2,-1)^P, 1^3 )$ \\ \cline{2-3}
&$5$ &  $( (2^4,-3^1)^P, (2^4,-3^1)^P, 1^5, 1^5 )$,
     $( (3^3,-2^2)^P, (2^4,-3^1)^P, 1^5, 1^5 )$,\\ 
&&
     $( (3^3,-2^2)^P, (3^3,-2^2)^P, 1^5, 1^5 )$,
     $( (5,0^4)^P, (5,0^4)^P, (2^4,-3^1)^P, 1^5 )$,\\
&&
     $( (5,0^4)^P, (5,0^4)^P, (3^3,-2^2)^P, 1^5 )$ \\ \cline{2-3}
&$6$ & $( (3^4,-3^2)^P, 1^6, 1^6, 1^6 )$,
     $( (5^2,-1^4)^P, (3^4,-3^2)^P, 1^6, 1^6 )$\\\cline{2-3}
&$7$ & $( (2^6,-5^1)^P, 1^7, 1^7, 1^7 )$,
     $( (5^3,-2^4)^P, 1^7, 1^7, 1^7 )$,
     $( (6^2,-1^5)^P, (2^6,-5^1)^P, 1^7, 1^7 )$,\\
&&
     $( (6^2,-1^5)^P, (5^3,-2^4)^P, 1^7, 1^7 )$ \\\cline{2-3}
&$9$& $((9,0^8)^P, (4^6,-5^3)^P, 1^9, 1^9 )$,
    $( (9,0^8)^P, (5^5,-4^4)^P, 1^9, 1^9 $) \\\cline{2-3}
&$11$ &  $((11,0^{10})^P, (3^9,-8^2)^P, 1^{11}, 1^{11} )$,
    $( (11,0^{10})^P, (8^4,-3^7)^P, 1^{11}, 1^{11} )$\\\cline{2-3}
&$13$ & $( (6^8,-7^5)^P, 1^{13}, 1^{13}, 1^{13} )$,
    $( (7^7,-6^6)^P, 1^{13}, 1^{13}, 1^{13} )$\\\cline{2-3}
& $14$ &$( (5^{10},-9^{4})^P, 1^{14}, 1^{14}, 1^{14} )$,
    $( (9^{6},-5^{8})^P, 1^{14}, 1^{14}, 1^{14} )$ \\\cline{2-3}
& $19$ & $( (4^{16},-15^3)^P, 1^{19}, 1^{19}, 1^{19} )$,
    $((15^5,-4^{14})^P, 1^{19}, 1^{19}, 1^{19} )$\\
\hline
\end{tabular}
}
\end{center}
\end{table}

By repeating the inverse modification of $\mathfrak{X}'$ in Remark~\ref{rem:X_0},   $\mathfrak{X}_0$ in Table \ref{table:6} can be modified 
 to  
$\mathfrak{X}$ whose element can be added to $\tilde{H}(n,m)$.  
When $M_{\mathfrak{X}}<2m$, we apply the inverse modification of $\mathfrak{X}'$ to $\mathfrak{X}$. 
Note that the inverse modification of $\mathfrak{X}'$ sometimes has several  possibilities. 
The sets $\mathfrak{X}_0$ with $M_{\mathfrak{X}_0}<2m$ are in Table \ref{table:7} for $m=3,4$. 
\begin{table}
\caption{$\mathfrak{X}_0$ satisfying $M_{\mathfrak{X}_0}<2m$} \label{table:7}
\begin{center}
{\small
\begin{tabular}{|c|c|l|}
\hline
$m$ & $n$ & $n\mathfrak{X}_0$\\ \hline
$3$ & $3$ & $( 1^3, 1^3, 1^3 )$, $( (2^2,-1)^P, 1^3, 1^3 )$\\
\hline
$4$ & $2$ &$ (1^2, 1^2, 1^2, 1^2)$\\
&$3$ & $( (3,0^2)^P, 1^3, 1^3, 1^3 )$,
     $( (3,0^2)^P, (2^2,-1)^P, 1^3, 1^3 )$ \\
&$6$ & $( (3^4,-3^2)^P, 1^6, 1^6, 1^6 )$\\
&$7$ & $( (2^6,-5^1)^P, 1^7, 1^7, 1^7 )$,
     $( (5^3,-2^4)^P, 1^7, 1^7, 1^7 )$\\
\hline
\end{tabular}
}
\end{center}
\end{table}
 Table \ref{table:8} is the list of $\mathfrak{X}$ obtained from $\mathfrak{X}_0$, up to block permutations.

\begin{table}
\caption{$\mathfrak{X}$ obtained from $\mathfrak{X}_0$ in Table \ref{table:7}} \label{table:8}
\begin{center}
\begin{tabular}{|c|c|l|}
\hline
$m$& $n$ &  $n\mathfrak{X}$\\ \hline
$3$ & $3$& $((4,1,-2)^P,1^3,1^3)$, $((5,-1^2)^P,1^3,1^3)$ \\ \hline
$4$ & $2$& $((3,-1)^P,1^2,1^2,1^2)$\\ &$3$& $((3,0^2)^P,(4,1,-2)^P,1^3,1^3)$, $((3^2,-3)^P,1^3,1^3,1^3)$, \\&&
$((3,0^2)^P,(5,-1^2)^P,1^3,1^3)$ \\
&$6$ & $((9,3^2,-3^3)^P,1^6,1^6,1^6)$\\
&$7$ & $((9,2^4,-5^2)^P,1^7,1^7,1^7)$, 
$((12,5,-2^5)^P,1^7,1^7,1^7)$\\
\hline
\end{tabular}
\end{center}
\end{table}

We would like to find the largest sets, which can be added to $\tilde{H}(n,m)$ while maintaining $m$-distance and are in a union of the sets $\mathfrak{X}$ in Tables \ref{table:6} and \ref{table:8}.   
First we classify the largest subsets of $\mathfrak{X}$ whose maximum distance is at most $\sqrt{2m}$. Such subsets can be added to $\tilde{H}(n,m)$. 
The maximum distance of each $\mathfrak{X}$ in Table \ref{table:6} is at most $\sqrt{2m}$ except for the sets in Table \ref{table:9}. 
\begin{table}
\caption{$\mathfrak{X}(\{k_i^{(j)}\})$ which needs Erd\H{o}s--Ko--Rado type results} \label{table:9}
\begin{center}
{\small
\begin{tabular}{|c|c|c|c|c|c|}\hline
$m$ & $n$ & $n\mathfrak{X}$ & 
Maximum set $X$&$|X|$& reason\\ \hline
$3$ 
&$9$ &  
    $((5^5,-4^4)^P,1^9,1^9)$
&$(\mathscr{F}_{3}(5,2,5),1^9,1^9)$&$56$ & Theorem~\ref{thm:EKR} (2)\\
& $11$ &  
    $((8^4,-3^7)^P,1^{11},1^{11})$
& $(\mathscr{F}_{0}(4,1,8),1^{11},1^{11})$& $120$&Theorem~\ref{thm:EKR} (1)\\
\hline
$4$ 
&$7$ & 
 $((6^2,-1^5)^P,(5^3,-2^4)^P,1^7,1^7)$
&$((6^2,-1^5)^P,
\mathscr{F}_{0}(3,1,5),1^7,1^7 )$& $315$&Lemma~\ref{lem:exce}\\
&$9$& 
$((9,0^8)^P,(5^5,-4^4)^P,1^9,1^9)$
& $((9,0^8 )^P,
\mathscr{F}_{3}(5,2,5),1^9,1^9 )$ & $504$&Lemma~\ref{lem:exce}\\
&$11$ &  
$((11,0^{10})^P,(8^4,-3^7)^P,1^{11},1^{11})$    
&
$((11,0^{10})^P,
\mathscr{F}_{0}(4,1,8),1^{11},1^{11} )$& $1320$&Lemma~\ref{lem:exce}\\
&$13$ & 
$((6^8,-7^5)^P,1^{13},1^{13},1^{13})$
&$(\mathscr{F}_{4}(8,4,6),1^{13},1^{13},1^{13})$&$495$ &Theorem~\ref{thm:EKR} (3)\\
& & & $(\mathscr{F}_{5}(8,4,6),1^{13},1^{13},1^{13})$&$495$ &Theorem~\ref{thm:EKR} (3) \\
& &    
$((7^7,-6^6)^P,1^{13},1^{13},1^{13})$
&$(\mathscr{F}_{3}(7,3,7),1^{13},1^{13},1^{13})$&$372$ &Theorem~\ref{thm:EKR} (2)\\
& $14$ &
$((9^6,-5^8)^P,1^{14},1^{14},1^{14})$    
&$(\mathscr{F}_{1}(6,2,9),1^{14},1^{14},1^{14})$& $525$&Theorem~\ref{thm:EKR} (2)\\
& $19$ & 
$((15^5,-4^{14})^P,1^{19},1^{19},1^{19})$    
&$(\mathscr{F}_{0}(5,1,15),1^{19},1^{19},1^{19})$&$3060$ &Theorem~\ref{thm:EKR} (1)\\
\hline
\end{tabular}
}
\end{center}
\end{table}
The largest subsets of $\mathfrak{X}$ in Table~\ref{table:9} is at most $\sqrt{2m}$ can be determined by some results related to the Erd\H{o}s--Ko--Rado theorem. Note that $(\alpha^{k_1},(\alpha-1)^{k_2})^P$ ({\it resp.} $(\alpha^{k_1},(\alpha-1)^{k_2}, (\alpha-2)^{k_3})^P$) is isometric to $(1^{k_1},0^{k_2})^P$ ({\it resp.} $(1^{k_1},0^{k_2},-1^{k_3})^P$). 
A subset~$X$ of $(1^k,0^{n-k})$ 
is {\it $(k-m)$-intersecting} 
if $d(\mb{x},\mb{y})^2 \leq 2m$ for any 
$\mb{x},\mb{y} \in X$. 
A family of non-empty subsets $X_1,\ldots,X_t$ of $(1^k,0^{n-k})$ is  
 {\it cross-intersecting} 
if $d(\mb{x},\mb{y})^2 < 2k$ for any 
$\mb{x}\in X_i$, $\mb{y} \in X_j$ and any $i$, $j$ with $i\ne j$.
 Let $\mathscr{F}_r$ denote 
\[
\mathscr{F}_r=\mathscr{F}_r(k,t)=\{(x_1,\ldots,x_n) \in (1^k,0^{n-k})^P \colon\,
|\{i \in \{1,\ldots, t+2r \} \colon\, x_i=1\}| \geq t+r
 \}
\] 
for $n\geq k\geq t+r$ and $n \geq t+2r$. 
Note that 
$\mathscr{F}_0=(1^t,(1^{k-t},0^{n-k})^P)$. 
We collect Erd\H{o}s--Ko--Rado type results that are needed later. 
\begin{theorem}[{\cite{EKR61,W84,AK97}}] \label{thm:EKR}
If $X \subset (1^k,0^{n-k})$ is $t$-intersecting,   
then the following hold. 
\begin{enumerate}
\item If $n> (k-t+1)(t+1)$, then 
$|X| \leq\binom{n-t}{k-t}$, and the largest set 
is $\mathscr{F}_0$, up to permutations of coordinates.   
\item If $(k-t+1)(2+(t-1)/(r+1))<n<(k-t+1)(2+(t-1)/r)$ for some $r \in \mathbb{N}$, then $|X| \leq |\mathscr{F}_r|$, and the largest set is $\mathscr{F}_r$, up to permutations of coordinates.
\item If $(k-t+1)(2+(t-1)/(r+1))=n$ for some $r \in \mathbb{N}$, then $|X| \leq |\mathscr{F}_r|=|\mathscr{F}_{r+1}|$, and the largest set is $\mathscr{F}_r$ or $\mathscr{F}_{r+1}$, up to permutations of coordinates.
\end{enumerate}
\end{theorem}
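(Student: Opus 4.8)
The plan is to recognize that this statement is a compilation of three classical results on $t$-intersecting families of $k$-subsets, recast in the distance-geometric language of the paper. Identifying a vector $\mb{x}\in(1^k,0^{n-k})^P$ with the $k$-subset $A=\{i\colon x_i=1\}$ of $\{1,\ldots,n\}$, the identity $d(\mb{x},\mb{y})^2=2(k-|A\cap B|)$ shows that ``$X$ is $t$-intersecting'' in the sense introduced just above is exactly the condition $|A\cap B|\ge t$ for every pair, the usual notion. Under this dictionary $\mathscr{F}_r$ becomes the Frankl (Ahlswede--Khachatrian) family $\{A\colon |A\cap\{1,\ldots,t+2r\}|\ge t+r\}$, and $\mathscr{F}_0$ is the $t$-star of all $A\supseteq\{1,\ldots,t\}$. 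Thus part (1) is the Frankl--Wilson theorem giving the star optimum $\mathscr{F}_0$ in the range $n>(k-t+1)(t+1)$ (the $r=0$ regime), while parts (2) and (3) constitute the Ahlswede--Khachatrian complete intersection theorem covering all remaining $n$. The proof I would present is therefore a direct appeal to \cite{EKR61,W84,AK97}; for completeness I sketch the underlying arguments.

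First I would reduce to left-compressed families. The shifting operation $S_{ij}$, which for $i<j$ replaces each $A$ with $j\in A,\ i\notin A$ by $(A\setminus\{j\})\cup\{i\}$ whenever the result is new, preserves the $t$-intersecting property and never decreases $|X|$. Hence it suffices to prove each bound for families invariant under all $S_{ij}$. This normalization is what makes both the eigenvalue argument and the generating-set argument tractable, and it is also where the ``up to permutations of coordinates'' clause in the conclusions originates.

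For part (1) I would use Wilson's eigenvalue (ratio-bound) method. A maximum $t$-intersecting family is an independent set in the graph on $k$-subsets with $A\sim B$ iff $|A\cap B|<t$; this graph lies in the Johnson scheme, so its adjacency matrix is a polynomial in the scheme's generators and its eigenvalues are explicit. The Hoffman ratio bound in the regime $n>(k-t+1)(t+1)$ then yields $|X|\le\binom{n-t}{k-t}$, and tracking the equality case through the compression identifies the optimum as $\mathscr{F}_0$. For $t=1$ one may instead invoke Katona's cyclic-permutation argument. For parts (2) and (3) I would run the Ahlswede--Khachatrian generating-set method on the compressed family: a left-compressed $t$-intersecting family is determined by its minimal generating sets, and a pushing--pulling exchange together with a weight count shows that among all such families the size is maximized exactly by one of the nested families $\mathscr{F}_r$. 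Comparing $|\mathscr{F}_r|$ with $|\mathscr{F}_{r+1}|$ as $n$ varies, the two agree precisely when $n=(k-t+1)(2+(t-1)/(r+1))$, which produces the strict-versus-equality dichotomy between (2) and (3) and pins down the optimal $r$ as the index of the interval containing $n$.

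The main obstacle is the generating-set analysis behind (2) and (3): proving that no left-compressed $t$-intersecting family beats the Frankl families, and locating the exact transition values of $n$, is the delicate core of the Ahlswede--Khachatrian theorem and is far harder than the ratio-bound argument for (1). Since the present paper needs Theorem~\ref{thm:EKR} only as a black box for the finitely many parameter sets arising in Table~\ref{table:9}, the cleanest course is to state it with the references \cite{EKR61,W84,AK97} and invoke it directly.
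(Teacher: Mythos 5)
Your proposal is correct and matches the paper's treatment: the paper gives no internal proof of Theorem~\ref{thm:EKR}, but simply quotes it from \cite{EKR61,W84,AK97}, exactly the ``black box'' appeal you settle on, and your dictionary $d(\mb{x},\mb{y})^2=2(k-|A\cap B|)$ correctly translates the paper's $(k-m)$-intersecting condition into the classical $t$-intersecting one with $\mathscr{F}_r$ the Frankl families. Your sketches of Wilson's eigenvalue argument for (1) and the Ahlswede--Khachatrian generating-set method for (2) and (3) are accurate summaries of the cited proofs, so nothing further is needed.
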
 

\begin{theorem}[\cite{HM67}]
\label{thm:sum}
Suppose $n \geq 2k$. If a pair of subsets $X, Y$ of $(1^k,0^{n-k})$ is cross-intersecting, 
then 
\[
|X|+|Y| \leq \binom{n}{k}-\binom{n-k}{k}+1.
\]
\end{theorem}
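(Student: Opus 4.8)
The plan is first to translate the statement into the language of set systems. Identifying a vector in $(1^k,0^{n-k})^P$ with the $k$-subset of $\{1,\ldots,n\}$ supporting its $1$'s, two such sets $A,B$ satisfy $d(\mb{x},\mb{y})^2=2(k-|A\cap B|)$, so the condition $d(\mb{x},\mb{y})^2<2k$ is exactly $A\cap B\neq\emptyset$. Thus $X$ and $Y$ become nonempty families of $k$-subsets of $\{1,\ldots,n\}$ that are \emph{cross-intersecting}: every member of $X$ meets every member of $Y$. The extremal configuration to keep in mind is $Y=\{B_0\}$ together with $X=\{A:A\cap B_0\neq\emptyset\}$, which gives $|X|+|Y|=\binom{n}{k}-\binom{n-k}{k}+1$; note also that if $|Y|=1$ the bound is immediate, since then every $A\in X$ meets the single member of $Y$.

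Next I would reduce to \emph{shifted} families. The $(i,j)$-shift applied simultaneously to $X$ and $Y$ preserves their cardinalities and preserves the cross-intersecting property (a standard compression lemma), so we may assume both $X$ and $Y$ are stable under all shifts $S_{ij}$ with $i<j$. A nonempty shifted $k$-family necessarily contains the minimal set $\{1,\ldots,k\}$. I would then induct on $n$. The base case $n=2k$ is easy: here two $k$-sets are disjoint precisely when they are complementary, so cross-intersection forces $Y$ to avoid all complements of members of $X$, giving $|X|+|Y|\leq\binom{2k}{k}=\binom{n}{k}-\binom{n-k}{k}+1$.

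For the inductive step with $n>2k$, I would split each family according to whether it contains the element $n$, writing $X_0=\{A\in X:n\notin A\}$ and $X_1=\{A\setminus\{n\}:A\in X,\ n\in A\}$, the latter a family of $(k-1)$-subsets of $\{1,\ldots,n-1\}$, and similarly for $Y$. The pairs $(X_0,Y_0)$, $(X_1,Y_0)$ and $(X_0,Y_1)$ are each cross-intersecting in $\{1,\ldots,n-1\}$, while $X_1$ and $Y_1$ carry no constraint a priori. Using Pascal's identity on $\binom{n}{k}$ and $\binom{n-k}{k}$, the target bound splits as the sum of the two inequalities $|X_0|+|Y_0|\leq\binom{n-1}{k}-\binom{n-1-k}{k}+1$ and $|X_1|+|Y_1|\leq\binom{n-1}{k-1}-\binom{n-1-k}{k-1}$. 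The first follows directly from the inductive hypothesis, since $\{1,\ldots,k\}$ lies in both $X_0$ and $Y_0$, so both are nonempty.

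The crux — and the step I expect to be the main obstacle — is the second inequality, because $X_1$ and $Y_1$ are not obviously cross-intersecting. Here shiftedness is essential: it forces the upper shadow $\nabla(Y_1)=\{B'\cup\{i\}:B'\in Y_1,\ i\in\{1,\ldots,n-1\}\setminus B'\}$ to lie inside $Y_0$, and since each member of $X_1$ meets every member of $Y_0$, a short argument (if $A'\cap B'=\emptyset$ then $\{1,\ldots,n-1\}\setminus B'\subseteq A'$, impossible as $|A'|=k-1<n-k=|\{1,\ldots,n-1\}\setminus B'|$) shows that in fact every $A'\in X_1$ meets every $B'\in Y_1$; that is, $X_1,Y_1$ \emph{are} cross-intersecting. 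Applying the inductive hypothesis to them as $(k-1)$-families in $\{1,\ldots,n-1\}$ (valid since $n-1\geq 2(k-1)$) and simplifying via Pascal yields the second inequality, the one unit of slack being absorbed by the identity $\binom{n-k}{k-1}-\binom{n-1-k}{k-1}=\binom{n-1-k}{k-2}\geq 1$. The remaining care is the boundary case where $X_1$ or $Y_1$ is empty, which is handled directly by bounding the nonempty one against the $(k-1)$-sets meeting a fixed member of $X_0$ or $Y_0$.
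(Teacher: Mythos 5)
Your proof is correct, but there is nothing in the paper to compare it against: the statement is imported verbatim as a known theorem of Hilton and Milner \cite{HM67}, quoted without proof and used only as a black box in the case analysis of Lemma 5.5 (the bound $\binom{7}{3}-\binom{4}{3}+1=32$ when exactly one of the three families on a triangle is empty). Measured against the literature instead, your route is the modern compression proof rather than Hilton and Milner's original argument, and all the key steps check out: the dictionary $d(\mathbf{x},\mathbf{y})^2=2(k-|A\cap B|)$ makes the paper's metric condition exactly cross-intersection of $k$-uniform families; simultaneous $(i,j)$-shifts preserve cardinalities and cross-intersection, and a nonempty shifted family contains $\{1,\ldots,k\}$, which is what guarantees $X_0,Y_0\neq\emptyset$ so the inductive hypothesis may legitimately be applied to them; the base case $n=2k$ is the complement-pairing count; and the genuinely nontrivial point, that $X_1,Y_1$ are again cross-intersecting, follows correctly from $\nabla(Y_1)\subseteq Y_0$ together with $|A'|=k-1<n-k$, where the strict inequality uses $n>2k$, which indeed holds in the inductive step. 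The bookkeeping is also right: Pascal splits the target into the two inequalities you state, and the unit of slack is $\binom{n-k}{k-1}-\binom{n-1-k}{k-1}=\binom{n-k-1}{k-2}\geq 1$, valid for $k\geq 2$; for $k=1$ this slack vanishes, but there shiftedness forces $X=Y=\{\{1\}\}$, hence $X_1=Y_1=\emptyset$, so only your boundary case occurs, and that case is soundly handled by noting every member of a nonempty $Y_1$ (say) meets $\{1,\ldots,k\}\in X_0$, giving $|Y_1|\leq\binom{n-1}{k-1}-\binom{n-1-k}{k-1}$ directly. In short: the paper buys the inequality for free by citation, while your argument buys a self-contained and elementary proof at the cost of a page of standard shifting machinery; as a referee I would only ask you to state the shift-preservation lemma explicitly and to record the trivial subcase $X_1=Y_1=\emptyset$, neither of which is a gap.
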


\begin{theorem}[{\cite{H77,B14}}] \label{lem:sum3}
Suppose $n>2k$ and $s> n/k$. If a family of subsets $X_1,\ldots, X_s$ of  $(1^k,0^{n-k})$ is  cross-intersecting,  
then  
\[
\sum_{i=1}^s|X_i| \leq s \binom{n-1}{k-1}. 
\]
If equality holds, then $X_i=\mathscr{F}_0$ for each $i \in \{1,\ldots, s\}$, up to permutations of coordinates.  
\end{theorem}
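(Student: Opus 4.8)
The plan is to recast the statement in set-theoretic language and then attack it with Katona's cyclic permutation method. Identifying a vector in $(1^k,0^{n-k})^P$ with the $k$-subset $A\subseteq\{1,\dots,n\}$ supporting its $1$'s, one has $d(\mb{x},\mb{y})^2=2k-2|A\cap B|$, so the condition $d(\mb{x},\mb{y})^2<2k$ defining cross-intersecting is exactly $A\cap B\neq\emptyset$. Thus $X_1,\dots,X_s$ become $k$-uniform families on $\{1,\dots,n\}$ such that every member of $X_i$ meets every member of $X_j$ whenever $i\neq j$, while the extremal family $\mathscr{F}_0$ is the star of all $k$-sets through a fixed coordinate, of size $\binom{n-1}{k-1}$. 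So what I must show is $\sum_i|X_i|\le s\binom{n-1}{k-1}$, with equality only for a common star.

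First I would fix a cyclic ordering $\pi$ of $\{1,\dots,n\}$ and call its $n$ sets of $k$ consecutive elements the \emph{arcs}. Averaging over all $\pi$, a fixed $k$-set is an arc with probability $k!(n-k)!\,n/n!=n/\binom{n}{k}$, so, writing $a_i(\pi)$ for the number of arcs belonging to $X_i$, each family satisfies $|X_i|=\frac{1}{n}\binom{n}{k}\,\mathbb{E}_\pi[a_i(\pi)]$. Summing and using $\binom{n-1}{k-1}=\frac{k}{n}\binom{n}{k}$, the desired inequality reduces to the purely local claim that
\[
\sum_{i=1}^s a_i(\pi)\le sk \qquad\text{for every }\pi.
\]
Since $n>2k$, two arcs meet precisely when their starting indices differ by at most $k-1$ cyclically, so the arc-index sets of $X_1,\dots,X_s$ form a collection in $\mathbb{Z}_n$ that is cross-intersecting in the sense that indices coming from different families lie within cyclic distance $k-1$.

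To prove the local claim I would split on the number $p$ of families that actually contain an arc in the ordering $\pi$. If $p\le 1$ the sum is at most the total number $n$ of arcs, which is strictly less than $sk$ exactly because $s>n/k$; this is the only place the hypothesis $s>n/k$ enters. If $p\ge 2$, pick two non-empty families and observe that every arc of one lies within cyclic distance $k-1$ of every arc of the other, which confines all arcs to a short window and lets me linearize the indices. Writing $g$ and $G$ for the smallest and largest indices used, either $g$ and $G$ lie in different families—forcing $G-g\le k-1$, so each family has at most $k$ arcs and the total is at most $pk\le sk$—or they lie in one family $a$ of diameter $D=G-g$, in which case all other families are squeezed into a window of size $2k-1-D$; a short computation (or an induction on $p$) again bounds the total by $pk\le sk$.

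Finally, for the equality case I would run the averaging backwards: $\sum_i|X_i|=s\binom{n-1}{k-1}$ forces $\sum_i a_i(\pi)=sk$ for \emph{every} $\pi$, and equality in the local claim forces $p=s$ with all $s$ families equal to one common window of $k$ consecutive indices; those $k$ arcs share a single point, so for every $\pi$ the families coincide with one star, whence $X_i=\mathscr{F}_0$ for all $i$, up to one relabeling of coordinates. I expect the main obstacle to be precisely the $p\ge 2$ step of the local claim: the cross-intersecting condition couples the families, so the window-confinement argument has to be organized carefully—cleanest by induction on $p$—to rule out configurations that trade one family of large diameter against many small ones, and the same care is needed to verify that only the common window attains equality.
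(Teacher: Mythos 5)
The paper never proves this theorem; it imports it from Hilton \cite{H77} (the inequality) and Borg \cite{B14} (the uniqueness), so your proposal can only be compared with the classical argument, and indeed your route is exactly the classical one: Katona's cycle method. Your reduction is correct as far as it goes. The dictionary $d(\mb{x},\mb{y})^2=2k-2|A\cap B|$, so that cross-intersecting means $A\cap B\neq\emptyset$ across families, is right; the averaging identity $|X_i|=\tfrac{1}{n}\binom{n}{k}\,\mathbb{E}_\pi[a_i(\pi)]$ and the reduction to the local claim $\sum_i a_i(\pi)\le sk$ are right; and the case $p\le 1$, where $\sum_i a_i(\pi)\le n<sk$ because $s>n/k$, is exactly where that hypothesis must enter.

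The genuine gap is in the $p\ge 2$ case of the local claim, and it is precisely the circular geometry you tried to wave away. Your assertion that the cross-intersecting condition ``confines all arcs to a short window and lets me linearize'' is false when $n$ is close to $2k$, because arcs \emph{within one family} are unconstrained: take $n=7$, $k=3$, $p=2$, with starting positions $S_1=\{0,4\}$ and $S_2=\{2,5,6\}$ in $\mathbb{Z}_7$. Every cross pair of starting positions is within cyclic distance $k-1=2$ (the corresponding arcs pairwise meet across families), yet the union $\{0,2,4,5,6\}$ is contained in no window shorter than $n-1=6$; the reason is that the set of positions within distance $k-1$ of both $0$ and $4$ is $B_2(0)\cap B_2(4)=\{2\}\cup\{5,6\}$, which is \emph{disconnected} on the short circle. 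Consequently your dichotomy breaks: cutting at an unused point (say $1$), the extremes $g=2$ and $G=0$ lie in different families but span $6>k-1$ positions, so neither ``$G-g\le k-1$'' nor the ensuing bound ``each family has at most $k$ arcs'' follows. The conclusion $\sum_i a_i(\pi)\le pk$ is still true (it is Hilton's lemma), but proving it requires handling the two-component intersections — e.g.\ splitting on whether some family contains two starting positions at cyclic distance $\ge k$, and then counting both components of the ball intersection that confines the other families — not the linear min/max computation you describe, and your proposed fix (induction on $p$) does not touch this issue. The equality analysis inherits the same problem and is additionally too optimistic as stated: local equality $\sum_i a_i(\pi)=pk$ admits configurations other than a common $k$-window (for $p=2$, a singleton family together with a full $(2k-1)$-window family attains $2k$), so deducing ``all families equal one common window for every $\pi$'' needs $p=s\ge 3$ and a careful elimination of these exceptional configurations before you can conclude $X_i=\mathscr{F}_0$.
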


We use the notation 
\[
\mathscr{F}_{r}(k,t,k_0)=\{\mb{x} \in (k_0^k,(k_0-n)^{n-k})^P   \colon\, 
1/n(\mb{x}- (k_0-n)\mb{1})  \in \mathscr{F}_{r}(k,t)\},
\]
where $\mb{1}=(1^n)$. 
By Theorem \ref{thm:EKR}, we can determine the largest subsets as Table \ref{table:9} except for $(m,n)=(4,7),(4,9),(4,11)$. 
\begin{lemma} \label{lem:exce}
If $X$ is the largest subset of $\mathfrak{X}_0$ whose distances are in $\{\sqrt{2},\sqrt{4},\sqrt{6},\sqrt{8}\}$, 
 then the following hold.
\begin{enumerate}
\item For 
$\mathfrak{X}_0=
(1/7)((6^2,-1^5)^P,
(5^3,-2^4)^P,
1^7,1^7)$, 
we have
\[
X=\left(\left(\frac{6}{7}^2,-\frac{1}{7}^5 \right)^P,
\frac{1}{7}\mathscr{F}_{0}(3,1,5),\frac{1}{7}^7,\frac{1}{7}^7  \right),
\]
up to permutations on coordinates in the second block.
\item For 
$\mathfrak{X}_0=
(1/9)((9,0^8)^P,(5^5,-4^4)^P,
1^9,1^9)
$, 
we have
\[
X=\left(\left(1^1,0^8 \right)^P,
\frac{1}{9}\mathscr{F}_{3}(5,2,5),\frac{1}{9}^9,\frac{1}{9}^9  \right),
\]
up to permutations on coordinates in the second block.
\item For 
$\mathfrak{X}_0=
(1/11)((11,0^{10})^P,
(8^4,-3^7)^P,
1^{11},1^{11})
$, 
we have
\[
X=\left(\left(1^1,0^{10} \right)^P,
\frac{1}{11}\mathscr{F}_{0}(4,1,8),\frac{1}{11}^{11},\frac{1}{11}^{11}  \right),
\]
up to permutations on coordinates in the second block.
\end{enumerate}
\end{lemma}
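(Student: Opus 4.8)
The plan is to translate each of the three statements into a cross-intersecting problem for set families and then to apply Theorem~\ref{lem:sum3}. Since $m=4$ and the squared Euclidean distance is additive over the four blocks, and since the last two blocks of $\mathfrak{X}_0$ are the single point $\tfrac1n 1^n$, a subset $X\subseteq\mathfrak{X}_0$ has all of its distances in $\{\sqrt2,\sqrt4,\sqrt6,\sqrt8\}$ exactly when every pair of its elements satisfies $d(\mb{x}_1,\mb{y}_1)^2+d(\mb{x}_2,\mb{y}_2)^2\le 8$, the subscripts denoting the first two blocks. Using the isometry $(\alpha^{k_1},(\alpha-1)^{k_2})^P\cong(1^{k_1},0^{k_2})^P$, I would identify the first block with the $k_1$-subsets and the second with the $k_2$-subsets of $\{1,\dots,n\}$, so that two $k$-subsets $A,B$ are at squared distance $2(k-|A\cap B|)$. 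A short check of the distance regimes shows that the only forbidden pairs (distance sum exceeding $8$) are those whose first-block parts are disjoint \emph{and} whose second-block parts meet in fewer than $k_2-(4-k_1)$ points.

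First I would group $X$ according to its first-block value $A$, writing $X_A$ for the corresponding fiber in the second block, so that $|X|=\sum_A|X_A|$. The regime analysis shows that within a fiber there is no constraint, that (when $k_1=2$) fibers over first-block parts meeting in one point are unconstrained, and that fibers $X_A,X_B$ over \emph{disjoint} first-block parts must be cross-intersecting in the sense $|C\cap C'|\ge k_2-(4-k_1)$ for all $C\in X_A$, $C'\in X_B$. For $(m,n)=(4,9)$, where $k_2=5>n/2$, passing to complements turns the requirement $|C\cap C'|\ge2$ for $5$-subsets of $\{1,\dots,9\}$ into ordinary cross-intersection of $4$-subsets, so Theorem~\ref{lem:sum3} becomes applicable; for $(4,11)$ and $(4,7)$ the condition is already ordinary cross-intersection.

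For the cases $(4,9)$ and $(4,11)$ the first block has $k_1=1$, so any two distinct first-block values are disjoint and all $n$ fibers are pairwise cross-intersecting; applying Theorem~\ref{lem:sum3} with $s=n$ (legitimate since $n>2k'$ and $s=n>n/k'$ for the working size $k'=4$) bounds $|X|$ by $n\binom{n-1}{k'-1}$, equal to $504$ for $(4,9)$ and $1320$ for $(4,11)$. For $(4,7)$ the first block has $k_1=2$, and here the key step is to fix a proper edge-colouring of $K_7$, that is, a partition of the $21$ two-subsets of $\{1,\dots,7\}$ into seven matchings, each consisting of three pairwise disjoint pairs. Within one matching the three fibers are pairwise cross-intersecting families of $3$-subsets of $\{1,\dots,7\}$, so Theorem~\ref{lem:sum3} with $s=3$ (valid as $7>2\cdot3$ and $3>7/3$) gives $\sum_{A\in M}|X_A|\le 3\binom{6}{2}=45$; summing over the seven matchings yields $|X|\le 7\cdot 45=315$.

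For the extremal characterisation I would invoke the equality case of Theorem~\ref{lem:sum3}, which forces every non-empty fiber to be a star $\mathscr{F}_0$; the cross-intersecting requirement between fibers over disjoint first-block parts then forces these stars to share a common centre, and since the disjointness (Kneser) graph on the first-block parts is connected, a single global centre results, giving the sets of the stated form (after translating the complemented stars back to the tabulated $\mathscr{F}_r$), up to permutations of the second block. I expect the main obstacle to be case $(4,7)$: because the first-block parts are $2$-subsets, not all pairs are disjoint, so a single application of the sum bound is unavailable, and the naive route of summing the pairwise bound of Theorem~\ref{thm:sum} over all disjoint pairs overshoots $315$; the matching decomposition is precisely what repairs this. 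The complementation needed for $(4,9)$ and the verification of the numerical hypotheses of Theorem~\ref{lem:sum3} in each case are smaller technical points.
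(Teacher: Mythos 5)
Your proposal is correct and follows essentially the same route as the paper: the paper also fibers over the first block, uses exactly your matching decomposition (its ``triangle decomposition'' $\{T_i\}_{0\leq i\leq 6}$ of $(1^2,0^5)^P$) together with Theorem~\ref{lem:sum3} to get $7\cdot 45=315$ in case (1), and identifies the second block in case (2) with $(1^4,0^5)^P$ --- your complementation --- before applying Theorem~\ref{lem:sum3} with $s=n$ in cases (2) and (3). The only point you gloss over that the paper treats explicitly is that a triangle may contain empty fibers, where Theorem~\ref{lem:sum3} (stated for non-empty families) does not apply; the paper bounds those cases by $\binom{7}{3}=35$ and, via Theorem~\ref{thm:sum}, by $\binom{7}{3}-\binom{4}{3}+1=32$, both below $45$, so your bound and your equality analysis (which is in fact spelled out more carefully than the paper's) survive.
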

\begin{proof}
We use the notation $
S_{\mb{a}}=\{\mb{x}_2  \mid \mb{x}_1= \mb{a}, (\mb{x}_1,\mb{x}_2,\mb{x}_3,\mb{x}_4)\in X\}$, 
where $\mb{x}_i$ is a vector in $\mathbb{R}^n$.  

(1): 
If $d(\mb{x}_1,\mb{y}_1)^2=4$, then $d(\mb{x}_2,\mb{y}_2)^2\leq 4$ for $(\mb{x}_1,\mb{x}_2,\mb{x}_3,\mb{x}_4),(\mb{y}_1,\mb{y}_2,\mb{y}_3,\mb{y}_4) \in X$.  
Thus we
have $d(\mb{a},\mb{b})^2\leq 4$ for any $\mb{a} \in S_{\mb{x}_1}$, $\mb{b} \in S_{\mb{y}_1}$ such that $d(\mb{x}_1,\mb{y}_1)=4$,
and hence a pair $\{S_{\mb{x}_1}, S_{\mb{y}_1}\}$ is cross-intersecting. 
The set $((6/7)^2,(-1/7)^5)^P$ is isometric to $(1^2,0^5)^P$. The set $(1^2,0^5)^P$ has a  triangle decomposition $\{T_i\}_{0\leq i\leq 6}$, for example
\[
T_i=\{(0,0,1,0,1,0,0)^{\sigma(i)},(0,1,0,0,0,1,0)^{\sigma(i)},(1,0,0,0,0,0,1)^{\sigma(i)}\},
\]
where $(x_1,\ldots,x_7)^{\sigma(i)}=(x_{1+i},\ldots,x_{7+i})$ such that the indices are in $\mathbb{Z}/7\mathbb{Z}$. 
By Theorems~\ref{thm:sum} and 
\ref{lem:sum3}, we have 
\[
\sum_{\mb{a} \in T_i}|S_{\mb{a}}|
\leq \begin{cases}
\binom{7}{3}=35, \text{ if two sets are empty,}\\
\binom{7}{3}-\binom{4}{3}+1=32, \text{ if only one set is empty,}\\
3\binom{6}{2}=45, \text{ if no set is empty.}
\end{cases}
\]
It therefore follows that
\[
|X|=\sum_{i=0}^6\sum_{\mb{a} \in T_i} |S_{\mb{a}}| \leq 7 \cdot 45=315. 
\]
If equality holds, then $X$ is the set defined in (1).

(2): 
The 
second block can be identified with
 $(1^4,0^5)^P$, which
is isometric to $(1^5,0^4)^P$. 
For distinct vectors $(\mb{x}_1,\mb{x}_2,\mb{x}_3,\mb{x}_4),(\mb{y}_1,\mb{y}_2,\mb{y}_3,\mb{y}_4)\in X$, 
we have $d(\mb{x}_1,\mb{y}_1)^2=2$ and 
 $d(\mb{x}_2,\mb{y}_2)^2 \leq 6$. 
Thus a family $\{S_{\mb{x}_1}\}_{\mb{x}\in X}$ can be interpreted as a  
 cross-intersecting family. 
 The statement (2) therefore follows from Theorem~\ref{lem:sum3}.

(3): This case is proved by a similar manner to (2). 
\end{proof}

For $\mathfrak{X}$ 
in Table~\ref{table:8}, the largest subsets of $\mathfrak{X}$ whose maximum distance is at most $\sqrt{2m}$ are in Table \ref{table:10}. By computer search using Magma, we classify maximal cliques in the graph $(\mathfrak{X},E)$, 
where $E=\{(\mb{x},\mb{y}) \mid d(\mb{x},\mb{y})^2 \leq 2m\}$ except for $n=7$.   
We use the following results for $n=7$.
\begin{proposition}[\cite{ANX}]
Assume $m+k< l$. 
If $X\subset (1^m,0^k,-1^l)^P$ has maximum 
distance smaller than that of $(1^m,0^k,-1^l)^P$, then
\[
|X|\leq \binom{n-1}{m+k-1}\binom{m+k}{m}. 
\] 
The largest set is $(1,(1^{m-1},0^k,-1^l)^P) \cup (0,(1^{m},0^{k-1},-1^l)^P)$, up to permutations of coordinates. 
\end{proposition}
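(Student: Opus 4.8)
The plan is to translate the diameter condition on $X$ into an Erd\H{o}s--Ko--Rado problem. Write $n=m+k+l$ and identify each $\mb{x}\in(1^m,0^k,-1^l)^P$ with the triple $(P,Z,N)$ of its $+1$-, $0$-, and $-1$-coordinate sets, so $|P|=m$, $|Z|=k$, $|N|=l$ (and write $(P',Z',N')$ for $\mb{y}$). For two such vectors let $n_{s,t}$ be the number of coordinates $i$ with $x_i=s$ and $y_i=t$. Using $d(\mb{x},\mb{y})^2=\|\mb{x}\|^2+\|\mb{y}\|^2-2\langle\mb{x},\mb{y}\rangle=2(m+l)-2\langle\mb{x},\mb{y}\rangle$, it suffices to control $\langle\mb{x},\mb{y}\rangle=n_{1,1}+n_{-1,-1}-n_{1,-1}-n_{-1,1}$. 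Since $n_{1,1}\geq 0$, $n_{1,-1}\leq m$, $n_{-1,1}\leq m$, and $n_{-1,-1}=|N\cap N'|\geq |N|+|N'|-n=l-m-k$, I get $\langle\mb{x},\mb{y}\rangle\geq l-3m-k$, so the maximum squared distance of the whole orbit is $D^2=8m+2k$.

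The key step is to characterize the extremal pairs. I would show $d(\mb{x},\mb{y})^2=D^2$ if and only if $N\cup N'=\{1,\dots,n\}$, equivalently $S(\mb{x})\cap S(\mb{y})=\emptyset$, where $S(\mb{x}):=P\cup Z$ is the set of coordinates whose value is not $-1$, an $(m+k)$-subset of $\{1,\dots,n\}$. Indeed, equality in the inner-product bound forces $n_{1,1}=0$, $n_{1,-1}=n_{-1,1}=m$, and $|N\cap N'|=l-m-k$; the last equality is equivalent to $N\cup N'=\{1,\dots,n\}$, and the converse is a direct check. Because all pairwise distances are square roots of even integers, the condition that the maximum distance of $X$ be strictly smaller than $D$ is equivalent to: no two distinct elements of $X$ have disjoint $S$-sets. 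In other words, the family $\{S(\mb{x})\colon\mb{x}\in X\}$ is an intersecting family of $(m+k)$-subsets of $\{1,\dots,n\}$.

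Next I would split $|X|$ into two independent contributions. For a fixed $(m+k)$-set $S$, the vectors $\mb{x}$ with $S(\mb{x})=S$ are exactly those placing $m$ ones and $k$ zeros inside $S$ (and $-1$ elsewhere), so there are $\binom{m+k}{m}$ of them, and any two of them share the same $S$ and hence never realize the distance $D$. Letting $\mathcal{S}$ denote the family of distinct $S$-sets occurring in $X$, this gives $|X|\leq\binom{m+k}{m}\,|\mathcal{S}|$. The hypothesis $m+k<l$ yields $n=m+k+l>2(m+k)$, so by Theorem~\ref{thm:EKR}~(1) (the classical Erd\H{o}s--Ko--Rado theorem, the case $t=1$) applied to the intersecting family $\mathcal{S}$ of $(m+k)$-subsets, $|\mathcal{S}|\leq\binom{n-1}{m+k-1}$, whence $|X|\leq\binom{n-1}{m+k-1}\binom{m+k}{m}$.

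For the equality case I would use the uniqueness statement in Theorem~\ref{thm:EKR}~(1), valid since $n>2(m+k)$: a maximum family $\mathcal{S}$ must equal $\mathscr{F}_0$, i.e. all $(m+k)$-sets containing a fixed coordinate, say coordinate $1$, and attaining the product bound forces all $\binom{m+k}{m}$ vectors over each such $S$ to lie in $X$. Hence $X$ is exactly the set of vectors whose first coordinate is not $-1$, namely $(1,(1^{m-1},0^k,-1^l)^P)\cup(0,(1^m,0^{k-1},-1^l)^P)$, up to permutations of coordinates. The main obstacle is getting this extremal-pair characterization exactly right---computing that the orbit diameter satisfies $D^2=8m+2k$ and proving the equivalence between a pair attaining $D$ and the disjointness of the $(m+k)$-sets $S(\mb{x})$; once that equivalence is established, the counting step and the appeal to Erd\H{o}s--Ko--Rado are routine.
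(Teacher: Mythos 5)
Your proof is correct, and since this proposition is only imported here from \cite{ANX} without an in-text proof, there is nothing in the paper to diverge from: your route --- computing the orbit diameter $D^2=8m+2k$, showing a pair attains it exactly when the $(m+k)$-sets of non-$(-1)$ coordinates are disjoint, factoring $|X|\leq \binom{m+k}{m}|\mathcal{S}|$ over the fibers, and then applying Theorem~\ref{thm:EKR}~(1) with $t=1$ (valid since $m+k<l$ gives $n>2(m+k)$, which also yields uniqueness of the star $\mathscr{F}_0$ and hence the stated extremal set) --- is precisely the diameter-avoiding-to-intersecting-family reduction the cited source is built on. All the individual estimates check out, including the equality analysis ($n_{1,1}=0$, $n_{1,-1}=n_{-1,1}=m$, $|N\cap N'|=l-m-k$ being forced simultaneously, with the last alone equivalent to disjointness of the $S$-sets); the parity remark about even squared distances is harmless but unnecessary, since ``maximum distance smaller than $D$'' already means no pair realizes $D$.
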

Let $X_1=(1,(0,-1^2)^P)\cup (0,(1,-1^2)^P)$, 
$Y_1=(-1,(1,0,-1)^P)$, and 
$Z_2=(-1,(1,0^2,-1)^P)$. We inductively define
\begin{align*}
X_k&=(0,X_{k-1})\cup (1,(0^k,-1^2)^P)\quad (k\geq 2),\\
Y_k&=(0,Y_{k-1})\cup (1,(0^k,-1^2)^P)\quad (k\geq 2),\\
Z_k&=(0,Z_{k-1})\cup (1,(0^k,-1^2)^P)\quad (k\geq 3).
\end{align*}

\begin{theorem}[\cite{ANX}]
If $X\subset (1,0^k,-1^2)^P$ has maximum 
distance smaller than that of $(1,0^k,-1^2)^P$, then
\[
|X|\leq \binom{k+3}{3}+2. 
\] 
The largest sets are $X_k$ $($$k\geq 1$$)$, $Y_k$ $($$k\geq 1$$)$, and $Z_k$ $($$k\geq 2$$)$, up to permutations of coordinates. 
\end{theorem}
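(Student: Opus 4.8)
The plan is to first convert the metric hypothesis into a purely combinatorial one. Writing each point of $(1,0^k,-1^2)^P\subset\R^{k+3}$ as a pair $(a;\{b,c\})$, where $a$ is the coordinate carrying the $+1$ and $\{b,c\}$ are the two coordinates carrying $-1$, one has $d(\mb{x},\mb{y})^2=6-2\langle\mb{x},\mb{y}\rangle$, so the diameter corresponds to the minimal inner product. Since $\langle\mb{x},\mb{y}\rangle=[P_x=P_y]+|N_x\cap N_y|-[P_x\in N_y]-[P_y\in N_x]$ (with $P$ the $+1$-coordinate and $N$ the $-1$-pair), the two negative indicators force $\langle\mb{x},\mb{y}\rangle\geq -2$, and the value $-2$ is attained exactly by the \emph{crossed} configuration: four distinct coordinates $a,a',d,e$ with $\mb{x}=(a;\{a',d\})$ and $\mb{y}=(a';\{a,e\})$. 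Hence the assumption that $X$ has maximum distance smaller than that of the full set is equivalent to requiring $\langle\mb{x},\mb{y}\rangle\geq -1$ for all distinct $\mb{x},\mb{y}\in X$, i.e. that $X$ avoids the crossed configuration. This reformulation is what makes an inductive attack feasible.

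Next I would induct on $k$, splitting $X$ by the value of the first coordinate into $A=\{x_1=1\}$, $B=\{x_1=0\}$, and $C=\{x_1=-1\}$. Restricting to the last $k+2$ coordinates turns $A$ into a family of $2$-subsets (the two $-1$-positions), $B$ into a subset of $(1,0^{k-1},-1^2)^P\subset\R^{k+2}$, and $C$ into a subset of $(1,0^{k},-1)^P$. The elementary observations are that the crossed configuration never occurs inside $A$ (both $+1$'s sit at coordinate $1$, giving $\langle\rangle\geq 1$), never inside $C$ (both share a $-1$ at coordinate $1$, forcing $\langle\rangle\geq -1$), and never between $A$ and $B$ (since $P_x=1\notin N_y$); moreover two elements of $B$ are forbidden in $\R^{k+3}$ exactly when their restrictions are forbidden in $\R^{k+2}$. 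Thus the restricted $B$ is admissible for parameter $k-1$, so by induction $|B|\leq\binom{k+2}{3}+2$ with its extremal shapes classified, while $|A|\leq\binom{k+2}{2}$ and $|C|\leq(k+2)(k+1)$ are trivial. Because $\binom{k+3}{3}+2=\binom{k+2}{2}+\bigl(\binom{k+2}{3}+2\bigr)$, the case $C=\emptyset$ already yields the bound, with equality forcing $A$ to be the full slice and $B$ extremal — which is precisely the recursive shape $(0,W)\cup(1,(0^k,-1^2)^P)$ of $X_k,Y_k,Z_k$.

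The substance is the case $C\neq\emptyset$, handled through the cross-constraints. Each $\mb{w}=(p;\{1,q\})\in C$ forbids from $A$ every $2$-set through $p$ except $\{p,q\}$, so in the graph $G_A$ on the last $k+2$ coordinates whose edges are the $-1$-pairs of $A$, every coordinate used as a $+1$-position in $C$ has $G_A$-degree at most $1$; dually, $\mb{w}$ forbids from $B$ every vector with $+1$ at $q$ and $-1$ at $p$. The plan is to show these deletions are expensive enough that $|C|$ can never exceed the combined deficiencies $\bigl(\binom{k+2}{2}-|A|\bigr)+\bigl(\binom{k+2}{3}+2-|B|\bigr)$, which again gives $|X|\leq\binom{k+3}{3}+2$. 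Tracking equality then shows that for $k\geq 3$ a nonempty $C$ is strictly suboptimal, while at $k=2$ the full $x_1=-1$ slice ties the bound and produces the extra extremal set $Z_2$; the families for larger $k$ then arise by propagating $X_1,Y_1,Z_2$ through the equality-forced recursion.

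The main obstacle is exactly this trade-off inequality together with its equality analysis. Since $C$ carries no internal constraint and a full $C$-slice has size $(k+2)(k+1)=2\binom{k+2}{2}$, i.e. twice a full $A$-slice, one cannot bound the three parts independently; the argument must quantify, by a Hall- or charging-type count over the coordinates forced to low $G_A$-degree and the correspondingly forbidden $B$-vectors, how much $A$ and $B$ must shrink per element of $C$, and must do so sharply enough to detect the single exceptional equality at $k=2$. Isolating the small cases $k=1,2$ from the generic regime $k\geq 3$, and verifying that no admissible configuration other than $X_k,Y_k,Z_k$ attains equality, is where the bulk of the bookkeeping will lie.
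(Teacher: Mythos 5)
The paper you are being compared against does not actually prove this theorem: it imports it from the companion paper \cite{ANX}, so there is no in-paper argument to match, and your proposal must stand on its own. Its framework is sound as far as it goes: the reduction of the diameter hypothesis to forbidding the crossed configuration (inner product $-2$, equivalently $P_x\in N_y$, $P_y\in N_x$, $N_x\cap N_y=\emptyset$) is correct, and so is the slice decomposition and its constraint bookkeeping --- no forbidden pair inside $A$, inside $C$, or between $A$ and $B$; the $A$--$C$ and $B$--$C$ forbidden patterns exactly as you state; $B$ recursing to parameter $k-1$; and the Pascal identity $\binom{k+3}{3}+2=\binom{k+2}{2}+\binom{k+2}{3}+2$ matching the recursions for $X_k,Y_k,Z_k$. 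But the proposal stops precisely where the theorem begins. The trade-off inequality $|C|\leq\bigl(\binom{k+2}{2}-|A|\bigr)+\bigl(\binom{k+2}{3}+2-|B|\bigr)$ is only announced (``the plan is to show these deletions are expensive enough''), with no charging scheme, injection, or Hall-type condition actually constructed. Since $C$ is internally unconstrained and a full $C$-slice alone ties the bound at $k=1$ (giving $Y_1$) and $k=2$ (giving $Z_2$), this inequality is sharp and delicate; it is the entire content of the theorem, and without it you have neither the bound nor the classification.

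Moreover, the equality analysis you sketch is false as stated. You claim that for $k\geq 3$ a nonempty $C$ is strictly suboptimal, but you split on the \emph{fixed} coordinate $1$ while the extremal sets are classified only up to permutation: transposing coordinate $1$ with the last coordinate of $X_3\subset\R^{6}$ produces an extremal set of size $\binom{6}{3}+2=22$ in which $11$ vectors carry a $-1$ in the first coordinate, so equality with $C\neq\emptyset$ occurs for every $k$, not just $k=2$. Consequently your ``equality-forced recursion'' cannot get off the ground as written: either you must first prove that every extremal set admits \emph{some} coordinate whose $-1$-slice is empty unless it is a permuted $Y_1$ or $Z_2$ --- which is essentially the classification itself and would need its own argument --- or the charging count must be carried out with enough precision to characterize all equality configurations having $C\neq\emptyset$ and recognize them as permuted copies of $X_k,Y_k,Z_k$. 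Either way, the decisive combinatorial work is missing, so what you have is a correct reformulation and a plausible induction scaffold, not a proof.
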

\begin{table}
{\small 
\caption{Largest subset whose distance at most $\sqrt{2m}$ } \label{table:10}
\begin{center}
\begin{tabular}{|c|c|c|c|c|}
\hline
$m$& $n$ &  $n\mathfrak{X}$&largest subset $X$ &$|X|$ \\ \hline
$3$ & $3$& $((4,1,-2)^P,1^3,1^3)$&
$\{((u_1,1,v_1),1^3,1^3),((u_2,v_2,1),1^3,1^3),$ & \\
&&& $((1,u_3,v_3),1^3,1^3)\}$ $((u_i,v_i)=(4,-2) \text{ or } (-2,4))$&$3$  \\ \cline{3-5}
    &  & $((5,-1^2)^P,1^3,1^3)$& $\{((5,-1^2),1^3,1^3)\}$&$1$ \\ \hline
$4$ & $2$& $((3,-1)^P,1^2,1^2,1^2)$&$((3,-1)^P,1^2,1^2,1^2)$ &$2$ \\ 
\cline{2-5}
&$3$& $((3,0^2)^P,(4,1,-2)^P,1^3,1^3)$&
$\{((3,0^2)^P,(u_1,1,v_1),1^3,1^3),((3,0^2)^P,(u_2,v_2,1),1^3,1^3),$ & \\
&&& $((3,0^2)^P,(1,u_3,v_3),1^3,1^3)\}$ $((u_i,v_i)=(4,-2) \text{ or } (-2,4))$&$9$ \\ 
\cline{3-5}
&&$((3^2,-3)^P,1^3,1^3,1^3)$&$((3^2,-3)^P,1^3,1^3,1^3)$ & 3 \\ \cline{3-5}
&&$((3,0^2)^P,(5,-1^2)^P,1^3,1^3)$&
$((3,0^2),(5,-1^2)^P,1^3,1^3)$, $((3,0^2)^P,(5,-1^2),1^3,1^3)$ &$3$ 
\\ \cline{2-5}
&$6$ & $((9,3^2,-3^3)^P,1^6,1^6,1^6)$& 
$(9,(3^2,-3^3)^P)\cup(3^2,(9,-3^3)^P)\cup
(3,9,(3,-3^3)^P)$
&$18$\\ \cline{2-5}
&$7$ & $((9,2^4,-5^2)^P,1^7,1^7,1^7)$& $X_7(9)$, $Y_7(9)$, $Z_7(9)$&$37$\\ 
\cline{3-5}
&&$((12,5,-2^5)^P,1^7,1^7,1^7)$ &$(12,(5,-2^5)^P) \cup (5,(12,-2^5)^P)$&$12$ \\
\hline
\end{tabular}
\end{center}
}
\end{table}
Let $X_k(k_0)$ be the subset of $(k_0,(k_0-n)^k,(k_0-2n)^2)$ obtained from $X_k$ by replacing the entries $1$, $0$, $-1$ to $k_0$, $k_0-n$, $k_0-2n$, respectively. The sets $Y_k(k_0)$, $Z_k(k_0)$ are similarly defined. 

Let $\mathfrak{X}$ and $\mathfrak{Y}$
be distinct sets in Tables~\ref{table:6} and \ref{table:8}.
We consider when
 $\mb{x}\in \mathfrak{X}$,  $\mb{y}\in \mathfrak{Y}$  can be   
simultaneously added to $\tilde{H}(n,m)$.
An element $(x_{11},\ldots, x_{1n}, \ldots,
x_{m1},\ldots,x_{mn})$ of $\mathfrak{X}$ 
is {\it canonical} if 
$x_{i,l}\geq x_{i,l+1}$ for any $i\in\{1,\ldots,m\}$, $l \in \{1,\ldots,n-1\}$. 

\begin{lemma}\label{lem:5.1}
If  $\mathfrak{X}$
and $\mathfrak{Y}$ are distinct sets that are 
expressed as \eqref{eq:XX},   
then the following are equivalent.
\begin{enumerate}
\item There exist $\mb{x}\in \mathfrak{X}$,  $\mb{y}\in \mathfrak{Y}$
such that $d(\mb{x},\mb{y})^2 \in 
\{2,4,\ldots,2m\}$. 
\item For canonical elements $\mb{x}'\in \mathfrak{X}$,  $\mb{y}'\in \mathfrak{Y}$, we have $d(\mb{x}',\mb{y}')^2 \in 
\{2,4,\ldots,2m\}$.
\end{enumerate}
Moreover if 
\[
\max_{\mb{x}\in \mathfrak{X},  \mb{y}\in \mathfrak{Y}} d(\mb{x},\mb{y})^2 \in 
\{2,4,\ldots,2m\},
\]
then any $\mb{x}\in \mathfrak{X}$,  $\mb{y}\in \mathfrak{Y}$ satisfy $d(\mb{x},\mb{y})^2 \in 
\{2,4,\ldots,2m\}$. 
\end{lemma}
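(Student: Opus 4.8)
The plan is to reduce the whole statement to two structural facts about $d(\mb{x},\mb{y})^2$ as $\mb{x}$ ranges over $\mathfrak{X}$ and $\mb{y}$ over $\mathfrak{Y}$: first, that any two such squared distances differ by an even integer (so that the value is pinned down modulo $2$ by $\mathfrak{X},\mathfrak{Y}$ alone, independently of the chosen permutations), and second, that the canonical pair realizes the \emph{minimum} of $d(\mb{x},\mb{y})^2$. Once both are in hand the equivalence and the final assertion follow by bookkeeping.

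For the first fact I would analyze each block separately, exactly as in the derivation of \eqref{eq:d(x,y)}. In block $j$ every entry of $\mb{x}_j$ has the form $k_0^{(j)}/n-a$ and every entry of $\mb{y}_j$ has the form $\ell_0^{(j)}/n-b$ with $a,b$ nonnegative integers, where $k_0^{(j)},\ell_0^{(j)}$ are the parameters of $\mathfrak{X},\mathfrak{Y}$. Writing $c_j=k_0^{(j)}-\ell_0^{(j)}$ and recording the integer level-differences $d_l=a_l-b_l$, the fact that each block of an element of $\mathfrak{X}$ or $\mathfrak{Y}$ sums to $1$ forces $\sum_l a_l=k_0^{(j)}-1$ and $\sum_l b_l=\ell_0^{(j)}-1$, hence $\sum_l d_l=c_j$. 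A short computation in each block then gives
\[
d(\mb{x},\mb{y})^2=-\frac1n\sum_{j=1}^m c_j^2+\sum_{j=1}^m\sum_{l=1}^n d_l^2 .
\]
The first summand is a constant depending only on $\mathfrak{X}$ and $\mathfrak{Y}$, and the second is an integer whose parity is fixed, since $d_l^2\equiv d_l\pmod 2$ while $\sum_l d_l=c_j$ does not depend on the permutations. Hence any two such squared distances differ by an even integer; this is the analogue, for the pair $(\mathfrak{X},\mathfrak{Y})$, of the parity observation used in the proof of Lemma~\ref{lem:M<2m}. I expect this block computation, and in particular isolating the permutation-independent constant, to be the only real work in the argument.

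For the second fact, fix $j$ and write $d(\mb{x}_j,\mb{y}_j)^2=\|\mb{x}_j\|^2+\|\mb{y}_j\|^2-2\langle\mb{x}_j,\mb{y}_j\rangle$, where the first two terms depend only on the fixed multisets of entries. Minimizing $d(\mb{x}_j,\mb{y}_j)^2$ is thus the same as maximizing $\langle\mb{x}_j,\mb{y}_j\rangle$, and by the rearrangement inequality this maximum is attained when both blocks are arranged in decreasing order, i.e. at the canonical elements; since the blocks are independent, the canonical pair $\mb{x}',\mb{y}'$ minimizes $d(\mb{x},\mb{y})^2$ over all of $\mathfrak{X}\times\mathfrak{Y}$. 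I would also record that, because $\mathfrak{X}\neq\mathfrak{Y}$, no element of $\mathfrak{X}$ equals any element of $\mathfrak{Y}$ (equal vectors would force the block multisets, and hence $\mathfrak{X}$ and $\mathfrak{Y}$, to coincide), so every such $d(\mb{x},\mb{y})^2$ is strictly positive.

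These facts assemble at once. The implication $(2)\Rightarrow(1)$ is immediate. For $(1)\Rightarrow(2)$, given $\mb{x},\mb{y}$ with $d(\mb{x},\mb{y})^2=2k\in\{2,\dots,2m\}$, minimality of the canonical pair gives $d(\mb{x}',\mb{y}')^2\le 2k\le 2m$; by the parity fact $d(\mb{x}',\mb{y}')^2$ differs from the integer $2k$ by an even integer, so it is itself an even integer, and it is positive, hence lies in $\{2,\dots,2m\}$. For the final assertion, if the maximum squared distance lies in $\{2,\dots,2m\}$ then it is an even integer, so by the parity fact \emph{every} $d(\mb{x},\mb{y})^2$ is an even integer; each is positive (hence at least $2$) and bounded above by that maximum (hence at most $2m$), and therefore lies in $\{2,\dots,2m\}$.
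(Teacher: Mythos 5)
Your proof is correct, and it reaches the two facts everything hinges on --- parity invariance of $d(\mb{x},\mb{y})^2$ over $\mathfrak{X}\times\mathfrak{Y}$, and minimality at the canonical pair --- by a different route than the paper. The paper proves both simultaneously with a single local computation: it sorts $\mb{y}$ into canonical order by adjacent transpositions and observes (its equation \eqref{eq:pr1}) that each swap changes the squared distance by $2st\geq 0$ with $s,t$ nonnegative integers, so the squared distance decreases monotonically toward the canonical pair while staying in the same even residue class; the second assertion of the lemma is then read off from the same identity. You instead establish parity globally, via the closed-form expansion
\[
d(\mb{x},\mb{y})^2=-\frac{1}{n}\sum_{j=1}^m c_j^2+\sum_{j=1}^m\sum_{l=1}^n d_l^2,
\qquad c_j=k_0^{(j)}-\ell_0^{(j)},\ \ \sum_l d_l=c_j,
\]
whose verification I checked (the cross term contributes $-2c_j^2/n$ precisely because the block sums are $1$, forcing $\sum_l a_l=k_0^{(j)}-1$), and you get minimality by quoting the rearrangement inequality rather than re-deriving it by swaps. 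Your version buys two things: the parity claim comes with an exact permutation-independent formula rather than an argument tracking a path of swaps, and you explicitly settle positivity of $d(\mb{x},\mb{y})^2$ by noting that distinctness of $\mathfrak{X}$ and $\mathfrak{Y}$ as sets of the form \eqref{eq:XX} forbids $\mb{x}=\mb{y}$ --- a point the paper leaves implicit. The paper's version buys self-containment: the swap identity \eqref{eq:pr1} is an elementary one-line computation that doubles as a proof of the rearrangement step you outsource, in the same spirit as the parity argument in Lemma~\ref{lem:M<2m}. Both assemblies of the equivalence and of the final assertion are then the same bookkeeping.
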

\begin{proof}
$(2)\Rightarrow (1)$ is clear. We prove $(1)\Rightarrow (2)$. 
Suppose $d(\mb{x},\mb{y})^2 \in \{2,4,\ldots, 2m\}$ for some $\mb{x} \in \mathfrak{X}$, $\mb{y} \in \mathfrak{Y}$. 
By permutation of coordinates, we may suppose $\mb{x}$ is canonical. When there exist $i$, $l$ such that $y_{i,l} < y_{i,l+1}$, we switch the positions of $y_{i,l}$, $y_{i,l+1}$ to reduce the distance between $\mb{x}$ and $\mb{y}$.
 Indeed, letting $x_{i,l}=\alpha$, $x_{i,l+1}=\alpha-s$, 
$y_{i,l}=\beta$, $y_{i,l+1}=\beta+t$, where $s$, $t$
are non-negative integers, the difference of the squared distances is
\begin{equation} \label{eq:pr1}
\big((x_{i,l}-y_{i,l})^2+(x_{i,l+1}-y_{i,l+1})^2\big)
-\big((x_{i,l}-y_{i,l+1})^2+(x_{i,l+1}-y_{i,l})^2\big)= 2st \geq 0. 
\end{equation}
By repeating this modification, we can obtain canonical element $\mb{y}$ that satisfies $d(\mb{x},\mb{y})^2 \in \{2,4,\ldots, 2m\}$. 

The second assertion is clear by \eqref{eq:pr1}. 
\end{proof}

Let $V(n,m)$ be the family consisting of all sets $\mathfrak{X}$ whose element can be added to $\tilde{H}(n,m)$. Let 
 $E(n,m)=\{(\mathfrak{X},\mathfrak{Y}) \colon\, d(x',y') \in \{2,4,\ldots,2m \} \}\subset V(n,m) \times V(n,m)$, where $x'$, $y'$ are the canonical elements of $\mathfrak{X}$, $\mathfrak{Y}$, respectively. 
 Let $\mathcal{G}(n,m)$ be the graph
$(V(n,m),E(n,m))$.  
By computer search using Magma, we can classify  maximal cliques in $\mathcal{G}(n,m)$, up to block permutations. 
Table~\ref{table:a} shows the all maximal cliques  in $\mathcal{G}(n,m)$. 
By Lemma~\ref{lem:5.1}, 
for any distinct sets $\mathfrak{X}$, $\mathfrak{Y}$ in a clique in Table~\ref{table:a},  
any $\mb{x} \in \mathfrak{X}$, $\mb{y} \in \mathfrak{Y}$ 
can be simultaneously added to $\tilde{H}(n,m)$,
except for the combination of $((3,0^2)^P,1^3,1^3,(4,1,-2)^P)$ and 
$(1^3,(3,0^2)^P,1^3,(5,-1^2)^P)$. 
\begin{table} 
\caption{Maximal cliques in $\mathcal{G}(n,m)$}
\label{table:a}
{\small
\begin{center}
\begin{tabular}{|c|c|c|}
\hline
$m$ & $n$ & maximal clique \\
\hline
$3$ & $3$ &   $(
            ( 5, -1^2 )^P,
             1^3 ,
             1^3 
        )$,
        $(
             1^3,
            (2^2, -1)^P,
            (2^2, -1)^P
        )$,
        $(
            (2^2, -1)^P,
            1^3,
            1^3
        )
    $ \\
\cline{3-3}
&&$(
        1^3,
        1^3,
        1^3
    )$, $(
        ( 4, 1, -2 )^P,
         1^3,
         1^3
    )$, $(
        1^3,
        ( 4, 1, -2 )^P,
        1^3
    )$, $(
         1^3,
         1^3,
        ( 4, 1, -2 )^P
    )$ \\
\cline{2-3}
& $5$& $(
            ( 3^3,  -2^2)^P,
             1^5,
            ( 5, 0^4)^P
        )$,
        $(
             1^5, 
             (2^4, -3)^P ,
            ( 5, 0^4)^P
        )$ \\
\cline{2-3}
&$9$& $(
        ( 5^5, -4^4)^P,
         1^9,1^9
    )$ \\
\cline{3-3}
&& $ (
        ( 4^6, -5^3)^P,
        1^9,1^9
    )$, $ (
        1^9,( 4^6, -5^3)^P,
        1^9
    )$, $ (
        1^9,( 4^6, -5^3)^P,
        1^9
    )$\\
\cline{2-3}
&$11$&$((3^9,-8^2)^P,1^{11},1^{11})$ \\
\cline{3-3} 
& &$((8^4,-3^7)^P,1^{11},1^{11})$ \\
\hline
$4$ &$2$ & $(1^2,1^2,1^2,1^2)$, $((3,-1)^P,1^2,1^2,1^2)$, $(1^2,(3,-1)^P,1^2,1^2)$,\\&&
$(1^2,1^2,(3,-1)^P,1^2)$, $(1^2,1^2,1^2,(3,-1)^P)$\\
\cline{2-3}
& $3$&  
$( ( 3, 0^2)^P, 1^3,1^3,1^3 )$,
$( 1^3, ( 3, 0^2)^P, 1^3,  ( 2^2,  -1 )^P)$,\\&&
$( 1^3, ( 2^2,-1 )^P, ( 3, 0^2 )^P,1^3 )$,
$( 1^3,1^3,(2^2, -1)^P,  (3, 0^2)^P )$,\\&&
$( ( 2^2, -1)^P, ( 3, 0^2)^P,(2^2,  -1)^P,  1^3)$, 
$( ( 2^2, -1 )^P, 1^3, ( 3, 0^2)^P,( 2^2,  -1)^P)$,\\&& 
$( ( 2^2, -1 )^P, (2^2,  -1)^P,1^3,( 3, 0^2)^P)$, 
$(( 3^2,-3 )^P, 1^3,1^3, 1^3 )$, \\&&
$( (3, 0^2)^P,( 4, 1, -2 )^P,1^3,1^3 )$, 
$( 1^3,( 5, -1^2)^P, ( 3, 0^2)^P,1^3)$, \\&&
$(   ( 3, 0^2)^P, 1^3,( 4, 1, -2 )^P,1^3 )$, 
$( 1^3,1^3,( 5, -1^2)^P, ( 3, 0^2)^P )$, \\&&  
$( ( 3, 0^2)^P,  1^3,1^3,( 4, 1, -2 )^P  )$,
$( 1^3, (3, 0^2)^P, 1^3, ( 5, -1^2)^P )$ \\
\cline{2-3}
&$5$ &
$(1^5,1^5,(2^4,-3)^P,(2^4,-3)^P)$, $((3^3,-2^2)^P,1^5,1^5,(2^4,-3)^P)$,\\
&& $(1^5,(3^3,-2^2)^P,(2^4,-3)^P,1^5)$, 
$((3^3,-2^2)^P,(3^3,-2^2)^P,1^5,1^5)$, \\
&& $((5,0^4)^P,1^5,(5,0^4)^P,(2^4,-3)^P)$, 
$(1^5,(5,0^4)^P,(2^4,-3)^P,(5,0^4)^P)$, \\
&& $((5,0^4)^P,(3^3,-2^2)^P,(5,0^4)^P,1^5)$,
$((3^3,-2^2)^P,(5,0^4)^P,1^5,(5,0^4)^P)$ \\
\cline{2-3}
&$6$& $((3^4,-3^2)^P,1^6,1^6,1^6)$, 
$(1^6,(5^2,-1^4)^P,1^6,(3^4,-3^2)^P)$,\\
&&$(1^6,(3^4,-3^2)^P,(5^2,-1^4)^P,1^6)$,
$(1^6,1^6,(3^4,-3^2)^P,(5^2,-1^4)^P)$,\\
&&$((9,3^2,-3^3)^P,1^6,1^6,1^6)$\\
\cline{2-3}
&$7$&$((2^6,-5)^P,1^7,1^7,1^7)$, 
$(1^7, (6^2,-1^5)^P,(5^3,-2^4)^P,1^7)$,\\
&& $(1^7,1^7, (6^2,-1^5)^P,(5^3,-2^4)^P)$, 
$(1^7,(5^3,-2^4)^P,1^7, (6^2,-1^5)^P)$,\\
&& $((9,2^4,-5^2)^P,1^7,1^7,1^7)$,\\
\cline{3-3}
&& $((5^3,-2^4)^P,1^7,1^7,1^7)$, 
$(1^7,(6^2,-1^5)^P,(2^6,-5)^P,1^7)$,\\
&&$(1^7,1^7,(6^2,-1^5)^P,(2^6,-5)^P)$, 
$(1^7,(2^6,-5)^P,1^7,(6^2,-1^5)^P)$,\\
&&$((12,5,-2^5)^P,1^7,1^7,1^7)$\\
\cline{2-3}
&$9$& $((9,0^8)^P,(5^5,-4^4)^P,1^9,1^9)$, 
$(1^9,1^9,(9,0^8)^P,(5^5,-4^4)^P)$\\
\cline{3-3}
&&$((9,0^8)^P,(4^6,-5^3)^P,1^9,1^9)$, 
$((9,0^8)^P,1^9,(4^6,-5^3)^P,1^9)$, \\
&&$((9,0^8)^P,1^9,1^9,(4^6,-5^3)^P)$ \\
\cline{2-3}
&$11$& $((11,0^{10})^P,(8^4,-3^7)^P,1^{11},1^{11})$, 
$(1^{11},1^{11},(11,0^{10})^P,(3^9,-8^2)^P)$\\
\cline{2-3}
&$13$&$((6^8,-7^5)^P,1^{13},1^{13},1^{13})$ \\
\cline{3-3}
& & $((7^7,-6^6)^P,1^{13},1^{13},1^{13})$\\
\cline{2-3}
&$14$&$((5^{10},-9^4)^P,1^{14},1^{14},1^{14})$ \\
\cline{3-3}
& & $((9^6,-5^8)^P,1^{14},1^{14},1^{14})$\\
\cline{2-3}
&$19$&$((4^{16},-15^3)^P,1^{19},1^{19},1^{19})$ \\
\cline{3-3}
& & $((15^5,-4^{19})^P,1^{19},1^{19},1^{19})$\\
\hline
\end{tabular}
\end{center}
}
\end{table}
We replace $\mathfrak{X}$ in a clique in Table \ref{table:a} with one of the largest subsets of $\mathfrak{X}$, which can be added to $\tilde{H}(n,m)$. Finally we obtain Table~\ref{table:12},  which includes the all largest subsets that can be added to $\tilde{H}(n,m)$.    
\begin{table}
\caption{Sets obtained from a maximal clique in $\mathcal{G}(n,m)$}
\label{table:12}
{\small
\begin{center}
\begin{tabular}{|c|c|c|c|c|}
\hline
$m$ & $n$ & $X$ & $|X|$ & 
$M$\\
\hline
$3$ & $3$  & $(
            ( 5, -1^2 ),
             1^3 ,
             1^3 
        ) \cup
        (
             1^3,
            (2^2, -1)^P,
            (2^2, -1)^P
        )\cup 
        (
            (2^2, -1)^P,
            1^3,
            1^3
        )
    $& $13$ & $40$\\
\cline{3-5}
&&$(
        1^3,
        1^3,
        1^3
    )\cup (
        ( 4, 1, -2 )^C,
         1^3,
         1^3
    )^P$& $10$ & $37$\\
\cline{2-5} 
&$5$& $(
            ( 3^3,  -2^2)^P,
             1^5,
            ( 5, 0^4)^P
        )
        \cup(
             1^5, 
             (2^4, -3)^P ,
            ( 5, 0^4)^P
        )$ & $75$& $200$ 
\\ \cline{2-5}
& $9$ & $(\mathscr{F}_3(5,2,5),1^9,1^9)$ &
$84$ & $785$ \\
\cline{3-5}
&& $ (
        ( 4^6, -5^3)^P,
        1^9,1^9
    )^P$
& $252$
& $981$
\\
\cline{2-5}
&$11$& $((3^9,-8^2)^P,1^{11},1^{11})$ &
$55$ 
&$1386$
\\ \cline{3-5}
& & $(\mathscr{F}_0(4,1,8),1^{11},1^{11})$
&$120$
&$1451$
\\ \hline
$4$ & $2$ & $(1^2,1^2,1^2,1^2)\cup((3,-1)^P,1^2,1^2,1^2)^P$ &$9$& $25$\\
\cline{2-5}
& $3$& $( ( 3, 0^2)^P, 1^3,1^3,1^3 )\cup ( 1^3, (( 3, 0^2)^P, 1^3,  ( 2^2,  -1 )^P)^C)$ & & \\ 
& & $\cup( ( 2^2, -1)^P, (( 3, 0^2)^P,(2^2,  -1)^P,  1^3)^C)\cup( 3^2,-3 )^P, 1^3,1^3, 1^3 )$ & & \\
& & $\cup( (3, 0^2)^P,(\{( -2,4, 1),( -2, 1, 4 )\},1^3,1^3)^C )\cup
( 1^3,(( 5, -1^2), ( 3, 0^2)^P,1^3)^C)$&$141$ & $222$ \\
\cline{3-5}
 & &$( ( 3, 0^2)^P, 1^3,1^3,1^3 )\cup ( 1^3, (( 3, 0^2)^P, 1^3,  ( 2^2,  -1 )^P)^C)$ & & \\ 
& & $\cup( ( 2^2, -1)^P, (( 3, 0^2)^P,(2^2,  -1)^P,  1^3)^C)\cup( 3^2,-3 )^P, 1^3,1^3, 1^3 )$ & & \\
& & $\cup( (3, 0^2)^P,(( 4,-2, 1)^C,1^3,1^3)^C )$&$141$ & $222$ \\
\cline{2-5}
&$5$&
$(1^5,1^5,(2^4,-3)^P,(2^4,-3)^P)\cup((3^3,-2^2)^P,1^5,1^5,(2^4,-3)^P)^{(1\, 2)(3\, 4)}$ & & \\
&& $\cup((3^3,-2^2)^P,(3^3,-2^2)^P,1^5,1^5)
\cup((5,0^4)^P,1^5,(5,0^4)^P,(2^4,-3)^P)^{(1\, 2)(3\, 4)} $ &&\\
&& $\cup ((5,0^4)^P,(3^3,-2^2)^P,(5,0^4)^P,1^5)^{(1\, 2)(3\, 4)}$&$975$&$1600$\\
\cline{2-5}
&$6$&
$((3^4,-3^2)^P,1^6,1^6,1^6)\cup  
(1^6,((5^2,-1^4)^P,1^6,(3^4,-3^2)^P)^C)$ &&\\
&&$\cup((9,(3^2,-3^3)^P)\cup(3^2,(9,-3^3)^C)\cup
(3,9,(3,-3^3)^C),1^6,1^6,1^6)$&$708$ &$2004$\\
\cline{2-5}
&$7$&$((2^6,-5)^P,1^7,1^7,1^7)\cup  
(1^7, ((6^2,-1^5)^P,\mathscr{F}_0(3,1,5),1^7)^C)$&&\\
&& $\cup (\{X_7(9), Y_7(9)\text{ or } Z_7(9)\},1^7,1^7,1^7)$ &$989$ & $3390$\\
\cline{3-5}
&& $((5^3,-2^4)^P,1^7,1^7,1^7)\cup(1^7,((6^2,-1^5)^P,(2^6,-5)^P,1^7)^C)$ &&\\
&&$\cup (12,(5,-2^5)^P)\cup (5,(12,-2^5)^P)$ &$488$ & $2889$\\
\cline{2-5}
&$9$& $((9,0^8)^P,\mathscr{F}_3(5,2,5),1^9,1^9)$, 
$(1^9,1^9,(9,0^8)^P,\mathscr{F}_3(5,2,5))$ &$1008$ &$7569$\\
\cline{3-5}
& &$((9,0^8)^P,((4^6,-5^3)^P,1^9,1^9)^C)$ &$2268$ &$8829$
\\
\cline{2-5}
&$11$& $((11,0^{10})^P,\mathscr{F}_0(4,1,8),1^{11},1^{11})\cup (1^{11},1^{11},(11,0^{10})^P,(3^9,-8^2)^P)$&$1925$&$16566$\\
\cline{2-5}
&$13$&$(\mathscr{F}_4(8,4,6),1^{13},1^{13},1^{13})$ &$495$& $29056$\\
\cline{3-5}
& & $(\mathscr{F}_3(7,3,7),1^{13},1^{13},1^{13})$ &$372$ &
$28933$\\
\cline{2-5}
&$14$&$((5^{10},-9^4)^P,1^{14},1^{14},1^{14})$ &$1001$ & $39417$\\
\cline{3-5}
& & $(\mathscr{F}_1(6,2,9),1^{14},1^{14},1^{14})$&$525$&$38941$\\
\cline{2-5}
&$19$&$((4^{16},-15^3)^P,1^{19},1^{19},1^{19})$ &$969$ &$131290$\\
\cline{3-5}
& & $(\mathscr{F}_0(5,1,15),1^{19},1^{19},1^{19})$ &$3060$ &$133381$\\
\hline
\end{tabular}
\end{center}
$(x_1,\ldots,x_n)^C=
\{(x_{1+i},\ldots,x_{n+i})\mid i\in \mathbb{Z}/n\mathbb{Z} \}$, 
$(X_1,\ldots,X_n)^C=
\bigcup_{i\in \mathbb{Z}/n\mathbb{Z}}(X_{1+i},\ldots,X_{n+i})$, 
$(X_1,X_2,X_3,X_4)^{(1\,2)(3\,4)}=
(X_1,X_2,X_3,X_4) \cup 
(X_2,X_1,X_4,X_3).$ 
}
\end{table}
\section{Maximal $2$-distance sets that are in $\R^{2n-1}$ and contain $\tilde{H}(n,2)$}\label{sec:6}
The Hamming graph $H(n,2)$ is embedded into 
$\R^{2(n-1)}$ as a $2$-distance set. Indeed the representation is 
\[
\tilde{H}(n,2)=((1^1,0^{n-1})^P,(1^1,0^{n-1})^P) \subset 
\{\mathbf{x} \in \R^{2n} \mid \sum_{i=1}^n x_i =1, \sum_{i=n+1}^{2n} x_i =1 \} \cong \R^{2(n-1)}, 
\]
where $\cong$ means isometry. 
In this section, we identify $\tilde{H}(n,2)$ with a $2$-distance set in $\R^{2n-1}$ as follows
\begin{align*}
\tilde{H}(n,2) \cong \hat{H}(n,2)&=((1^1,0^{n-1})^P,(1^1,0^{n-1})^P,0) \\
&\subset 
\{\mathbf{x} \in \R^{2n+1} \mid \sum_{i=1}^n x_i =1, \sum_{i=n+1}^{2n} x_i =1 \} \cong \R^{2n-1}. 
\end{align*}
We consider 
maximal $2$-distance sets that are in $\R^{2n-1}$ and  contain $\hat{H}(n,2)$.
Bannai, Sato, and Shigezumi~\cite{BSS12} considered a similar problem for the Johnson graph $J(n,2)$, and classified maximal $2$-distance sets.  
 
Suppose $\mathbf{x} \in \R^{2n-1}$ can be added to $\hat{H}(n,2)$ while maintaining $2$-distance. 
By a similar argument to that in Section~\ref{sec:2}, $\mathbf{x}$
forms 
\[
\mathbf{x} \in 
\left(
	\left(
\left(\frac{k}{n}^{n-k+1},\left(\frac{k}{n}-1\right)^{k-1}
	\right)^P,
\frac{1}{n}^n\right)^P, \beta
\right)
\]
for some $\beta \in \R$, and $1\leq k \leq n$. 
For $\mathbf{y} \in \hat{H}(n,2)$, we have 
\[
d(\mb{x}, \mb{y})^2
=-1-\frac{1}{n}+k-\frac{k^2}{n}
+ 2 l_1+4 l_2 +\beta^2 \in \{2,4\}, 
\]
where 
$l_i=|\{ q \colon\, y_q=1, x_q=k/n +1-i \}|\in\{0,1\}$. 
 It therefore holds that 
\[ 
\beta=\pm \sqrt{1+\frac{1}{n}-k+\frac{k^2}{n}}. 
\]
We use the notation
\begin{align*}
X_k^{\pm} &=\left(
	\left(
\left(\frac{k}{n}^{n-k+1},\left(\frac{k}{n}-1\right)^{k-1}
	\right)^P,
\frac{1}{n}^n\right)^P, \pm \sqrt{1+\frac{1}{n}-k+\frac{k^2}{n}}
\right),\\
Y_k^{\pm} &=\left(
\left(\frac{k}{n}^{n-k+1},\left(\frac{k}{n}-1\right)^{k-1}
	\right)^P,
\frac{1}{n}^n, \pm \sqrt{1+\frac{1}{n}-k+\frac{k^2}{n}}
\right),\\
Z_k^{\pm} &=\left(
\frac{1}{n}^n,
\left(\frac{k}{n}^{n-k+1},\left(\frac{k}{n}-1\right)^{k-1}
	\right)^P, \pm \sqrt{1+\frac{1}{n}-k+\frac{k^2}{n}}
\right)
\end{align*}
for $k\in \{1,\ldots,n\}$. 
Since the radicand of $\beta$ is not negative,
an element of $X_k^{\pm}$ can be added to 
$\hat{H}(n,2)$ only for $(k,n)$ such that $k\geq 1$ and $n \leq 5$, and for $(k,n)$ such that 
$k\in \{1,n-1,n,n+1\}$ and $n\geq 6$. 
The following is the classification of the maximal sets, which can be added to the corresponding $\hat{H}(n,2)$ and has size at least $2$.  
\begin{itemize}
\item  $n\geq 3$
\begin{enumerate}
\item $X_{n-1}^+$ or $X_{n-1}^-$ [$n(n-1)$ points] 
\end{enumerate}
\item  $n\geq 2$
\begin{enumerate}
\item  $Y_n^+$, $Y_n^-$, $Z_n^+$, or $Z_n^-$ [$n$ points] 
\end{enumerate}
\item $n=2$
\begin{enumerate}
\item $X_1^+ \cup X_1^{-}$ [2 points]
\end{enumerate}
\item $n=4$
\begin{enumerate}
\item $X_1^+\cup X_1^-$ [2 points]
\item $Y_2^+$, $Y_2^-$, $Z_2^+$, or $Z_2^-$  [4 points]
\item $X \subset X_3^+ \cup X_3^{-}$ [12 points] \\ 
$X$ contains only one of $(\mb{x},\sqrt{1/2})$ or $(\mb{y},-\sqrt{1/2})$ such that $x_i=3/4 \Leftrightarrow y_i=-1/4$, and  $x_i=-1/4 \Leftrightarrow y_i=3/4$.
\end{enumerate}
\item $n=5$
\begin{enumerate}
\item $Y_2^+ \cup Z_3^+$, or $Z_2^+\cup Y_3^+$  [15 points] 
\end{enumerate}
\item $n=8$
\begin{enumerate}
\item $X_1^+ \cup X_9^-$ or $X_1^- \cup X_9^+$  [$2$ points]  
\end{enumerate}
\end{itemize}
For $n\geq 3$ (1), $X_{n-1}^+ \cup \hat{H}(n,2)$ is isometric to $\tilde{J}(2n,2)$. For $n\geq 2$ (1), the graph obtained from $Y_n^{+}\cup \hat{H}(n,2)$ is bi-regular. 
For $n=5$ (1), $Y_2^+\cup Z_3^+ \cup \hat{H}(n,2)$ is still in $\mathbb{R}^8$, and it is also maximal in $\mathbb{R}^9$.

\bigskip

\noindent
\textbf{Acknowledgments.} 
The authors thank Norihide Tokushige for giving 
information about Theorem~\ref{lem:sum3} and thank   anonymous referees for 
careful reading and valuable comments. 
Nozaki is supported by JSPS KAKENHI Grant Numbers 25800011, 26400003, and 16K17569.

\end{document}